\definecolor{darkred}{rgb}{1,0,0} 
\definecolor{darkgreen}{rgb}{0,0.8,0}
\definecolor{darkblue}{rgb}{0,0,1}
\numberwithin{equation}{section}
\newtheorem {Theorem}{Theorem}
\numberwithin{Theorem}{section}
\newtheorem {Lemma}[Theorem]    {Lemma}
\newtheorem {Proposition}[Theorem]{Proposition}
\theoremstyle{definition}
\theoremstyle{remark}
\chardef\csname pre amssym.def at\endcsname=\the\catcode`\@
\def\undefine#1{\let#1\undefined}
\def\newsymbol#1#2#3#4#5{\let\next@\relax
 \ifnum#2=\@ne\let\next@\msafam@\else
 \ifnum#2=\tw@\let\next@\msbfam@\fi\fi
 \mathchardef#1="#3\next@#4#5}
\def\mathhexbox@#1#2#3{\relax
 \ifmmode\mathpalette{}{\m@th\mathchar"#1#2#3}%
 \else\leavevmode\hbox{$\m@th\mathchar"#1#2#3$}\fi}
\def\hexnumber@#1{\ifcase#1 0\or 1\or 2\or 3\or 4\or 5\or 6\or 7\or 8\or
 9\or A\or B\or C\or D\or E\or F\fi}
\font\teneufm=eufm10
\font\seveneufm=eufm7
\font\fiveeufm=eufm5
\newcommand{\CR}{{\mathcal R}}
\newcommand{\Aa}{{\mathcal A}}
\newcommand{\Ii}{{\mathcal I}}
\newcommand{\Pp}{{\mathcal P}}
\newcommand{\Ss}{{\mathcal S}}
\def    \R      {{\mathbb R}}
\def    \Z      {{\mathbb Z}}
\def    \N      {{\mathbb N}}
\def    \T      {{\mathbb T}}
\def    \CP     {{\mathbb C}{\mathbb P}}
\def    \12    {{\frac{1}{2}}}
\def    \loc  {\mathrm{loc}}
\def    \HF     {\operatorname{HF}}
\def    \FH     {\operatorname{FH}}
\def    \CF     {\operatorname{CF}}
\def    \Fix     {\operatorname{Fix}}
\def    \Per     {\operatorname{Per}}
\def    \va     {\vec{a}}
\begin{document}


\setlength{\smallskipamount}{6pt}
\setlength{\medskipamount}{10pt}
\setlength{\bigskipamount}{16pt}





\title[Primes and period growth]{On primes and period growth for Hamiltonian diffeomorphisms}

\author[Ely Kerman]{Ely Kerman}

\address{ Department of Mathematics, University of Illinois at 
Urbana-Champaign, Urbana, IL 61801, USA}
\email{ekerman@math.uiuc.edu}

\subjclass[2000]{53D40, 37J45, 70H12}
\date{\today} \thanks{This work was partially supported by a grant
from the Simons Foundation (207839). The author also acknowledges support from National Science
Foundation grant DMS 08-38434 ÓEMSW21-MCTP: Research Experience for
Graduate Students.}

\begin{abstract}  
Here we use Vinogradov's prime distribution theorem and a multi-dimensional generalization 
due to Harman to  strengthen some recent results from \cite{gg2} and \cite{gk} concerning the periodic points of Hamiltonian diffeomorphisms. In particular we establish resonance relations for 
the mean indices of the fixed points of Hamiltonian diffeomorphisms which do not have periodic points with arbitrarily large 
periods  in $\mathbb{P}^2$,  the set of natural numbers greater than one which have at most two prime factors when counted with multiplicity. As an application of these results we extend the methods of \cite{res1} to partially recover, using only symplectic tools,  a theorem on the periodic points of Hamiltonian diffeomorphisms of the sphere by  Franks and Handel from \cite{fh}. 

\end{abstract}

\maketitle

\section{Introduction and main results }
\subsection{Introduction}
\label{sec:intro}

Hamiltonian diffeomorphisms tend to have infinitely many periodic points as well as periodic points with arbitrarily large periods.  For Hamiltonian diffeomorphisms of the two-dimensional sphere these \emph{tendencies} are illustrated by the following two beautiful theorems by Franks, and Franks and Handel. 
\begin{Theorem}( \cite{f1,f2})
\label{f}
If $\phi \colon S^2 \to S^2$ is a  Hamiltonian diffeomorphism then $\phi$ has either two or infinitely many periodic points. 
\end{Theorem}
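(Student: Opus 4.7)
The plan is to approach Theorem \ref{f} via Floer-theoretic resonance relations, in the spirit of the present paper, rather than via Franks' original topological argument using variants of Brouwer's plane translation theorem. The strategy is: assume for contradiction that $\phi \colon S^2 \to S^2$ has only finitely many periodic points, and use the resonance relations promised in the abstract to show that there can then be at most two.

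First I would separate the two bounds. The lower bound of two periodic points is the Arnold conjecture for $S^2$, which is classical in this dimension. What must be ruled out is the case of finitely many periodic points with total count at least three. Under that assumption all periods are bounded, so a fortiori $\phi$ has no periodic orbit of period in $\mathbb{P}^2$ above some bound. This is precisely the hypothesis under which the resonance relations of the abstract apply: to each of the finitely many periodic orbits one attaches a well-defined mean index $\Delta_i \in \R$, defined via the Conley-Zehnder index of the linearized flow, and the collection $\{\Delta_i\}$ must satisfy a system of $\Z$-linear relations dictated by the Floer homology of $S^2$.

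The central step is to convert these arithmetic constraints into a dynamical contradiction. The Floer complex $\CF_*(\phi^k)$ computes $H_*(S^2)$, which is concentrated in only two degrees, while each periodic orbit contributes local Floer generators in a narrow window of width at most $2$ around $k\Delta_i$. Running $k$ over a sequence of almost-primes in $\mathbb{P}^2$ and applying Harman's multi-dimensional prime-distribution theorem to the vector $(\Delta_1,\ldots,\Delta_N)$, one forces strong rational relations among the $\Delta_i$ whenever $N\ge 3$. These forced relations, combined with the iteration inequalities for local Floer homology and the resonance constraints, should produce a new periodic orbit whose period lies in $\mathbb{P}^2$, contradicting the finiteness assumption.

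The main obstacle will be this final synthesis: converting the arithmetic alignment of the mean indices, delivered by Vinogradov/Harman, into the genuine \emph{existence} of a new periodic orbit rather than merely a failure of certain obstructions. The two-dimensionality of $S^2$ is essential, since it is what keeps the Floer-homology support narrow enough for mean-index alignment to have dynamical consequences; in higher dimensions the window of local contributions is simply too wide. Equally essential is the use of $\mathbb{P}^2$ rather than the primes, which supplies just enough flexibility in choosing $k$ to apply the prime-distribution input while still controlling the local Floer contributions under iteration.
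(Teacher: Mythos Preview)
The paper does not give its own proof of Theorem~\ref{f}; it is quoted as Franks' theorem, and the symplectic proof is attributed to \cite{res1}. What the paper actually proves is Theorem~\ref{fhnew}, which does imply Theorem~\ref{f} after an easy reduction (if $\Per(\phi)$ is finite with at least three elements, replace $\phi$ by $\phi^K$ with $K$ the lcm of all periods; then $\phi^K$ has at least three fixed points and $\Fix(\phi^{Kk})\subset\Per(\phi)$ is finite for every $k$, so Theorem~\ref{fhnew} yields periodic points of arbitrarily large period, a contradiction). So the relevant comparison is between your outline and the proof of Theorem~\ref{fhnew} in Section~\ref{fhn}.

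Your setup is correct up to the point where you invoke Theorem~\ref{resplus}: under the finiteness assumption the resonance hypothesis is satisfied, and one does obtain a nontrivial integer relation among the nonzero mean indices. But this is where your proposal stalls, and you essentially say so yourself. A single resonance relation does not by itself contradict having three or more fixed points, and nothing in Harman's theorem ``forces strong rational relations among the $\Delta_i$ whenever $N\ge 3$''---it forces exactly one. The paper's argument does not try to squeeze a contradiction out of the Floer theory of $S^2$ at all. Instead it uses the resonance input only to guarantee two fixed points $X,Y$ with irrational mean index (Lemma~\ref{two}), then \emph{blows up} two carefully chosen fixed points, doubles the resulting annulus map to an area-preserving map of $\T^2$ isotopic to the identity, and derives a contradiction from the dichotomy of Proposition~\ref{hom}: either $\FH(\psi)$ vanishes or it equals $H_{*+1}(\T^2)$, and one manufactures a nontrivial Floer class in a forbidden degree by controlling mean indices under large $p_j^2$-iteration. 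This torus-transfer step, together with the degenerate/nondegenerate case split on the set $\mathcal{Z}$ of fixed points with mean index $0\bmod 4$, is the genuine mechanism that converts the arithmetic of mean indices into the existence of new periodic points. It is entirely absent from your outline, and without it the ``final synthesis'' you flag as the main obstacle is not merely an obstacle but a gap.
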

\begin{Theorem}(Theorem 1.4 of \cite{fh})
\label{fh}
If $\phi \colon S^2 \to S^2$ is a nontrivial Hamiltonian diffeomorphism with at least three 
fixed points, then there exists integers $n>0$ and $K>0$ such that $\phi^n$ has a periodic point of period $k$ for every $k \geq K$.
\end{Theorem}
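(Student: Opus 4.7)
The plan is to argue by contradiction. Suppose $\phi$ has at least three fixed points and, for every positive integer $n$, there are arbitrarily large integers $k$ such that $\phi^n$ has no periodic point of period exactly $k$. The starting point is Theorem~\ref{f}: since $\phi$ has more than two fixed points, it has infinitely many periodic points, so the question is really about quantitative control of their periods.

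The first stage is to extract strong arithmetic constraints on the mean indices of the fixed points. On $S^2$ the total rank of $HF_*(\phi)$ is $2$, so from three fixed points $x_1,x_2,x_3$ at least two must cancel in the Floer complex. Combined with the Conley-Zehnder index iteration formulas and the common-jump phenomenon that underlies the resonance relations of \cite{gg2} and \cite{gk}, this cancellation forces a nontrivial linear relation over $\mathbb{Q}$ among the mean indices $\Delta_1,\Delta_2,\Delta_3$, with denominators controlled in terms of the multiplicities of iteration.

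The second stage converts these index constraints into period information. If $\phi$ had no simple periodic orbit of any period in some infinite set, then the rational resonance relation obtained above would have to persist under all iterates; this forces the admissible periods to lie in a fixed union of arithmetic progressions modulo some integer $d$. Vinogradov's theorem then guarantees infinitely many primes in those progressions, and Harman's multi-dimensional refinement extends this to $\mathbb{P}^2$. Together with the index growth of iterates, this produces a simple periodic orbit of $\phi$ of every sufficiently large period in $\mathbb{P}^2$, recovering the conclusion of \cite{res1} in this setting.

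The hard part is the third stage: upgrading ``all sufficiently large $k\in\mathbb{P}^2$'' to ``all sufficiently large integers $k$'' for some fixed iterate $\phi^n$. The tempting elementary approach is to choose $n$ to be a common denominator of the mean indices and note that $\phi^n$ inherits period-$m$ orbits from period-$nm$ orbits of $\phi$; but since $\mathbb{P}^2$ has density zero, $\{m : nm\in\mathbb{P}^2\}$ still misses most integers, so this cannot by itself yield every large period. To genuinely fill the gaps one would need an additional dynamical input — for instance a horseshoe or twist argument forced near the periodic orbits produced in stage two, providing periodic points of every sufficiently large period in a neighborhood. This is precisely the step that appears to lie beyond currently available purely symplectic tools, and is what I expect to be the main obstacle; consistent with the abstract's claim that only a partial recovery of Theorem~\ref{fh} is obtained, my proposal fully carries out the first two stages but leaves the density-one conclusion contingent on importing topological dynamics from \cite{fh}.
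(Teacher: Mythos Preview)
This theorem is \emph{not proved in the paper}. Theorem~\ref{fh} is quoted from Franks--Handel \cite{fh} as a known result established by methods of two-dimensional topological dynamics; the paper explicitly states that its goal was to recover this result symplectically and that it ``falls short of this goal.'' What the paper actually proves is the weaker Theorem~\ref{fhnew}: under the additional hypothesis that $\Fix(\phi^k)$ is finite for all $k$, either $\phi$ has periodic points of arbitrarily large period with rational mean indices not congruent to $0$ mod $4$, or some iterate $\phi^K$ has periodic points of arbitrarily large period in $\mathbb{P}^2$. So there is no ``paper's own proof'' of Theorem~\ref{fh} to compare against.

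Given that, let me comment on your proposal relative to what the paper does establish. You are right that the third stage is out of reach, and you flag this honestly. But your second stage already overclaims: you assert one obtains a periodic orbit of \emph{every} sufficiently large period in $\mathbb{P}^2$, whereas the paper only obtains \emph{arbitrarily large} periods in $\mathbb{P}^2$ (and even that requires passing to an iterate and assuming finite fixed-point sets at all orders). The mechanism is also different from what you sketch. The paper does not argue via rank-two Floer homology forcing cancellation among three fixed points and then read off arithmetic progressions of admissible periods. Instead it runs a sorting procedure on mean indices (zero mod $4$, irrational, or rational nonzero mod $4$), uses Theorem~\ref{resplus} and the Lefschetz theorem to guarantee at least two irrational-index fixed points and at least one zero-index fixed point, then blows up two fixed points to transfer the dynamics to $\T^2$ and derives a contradiction with the Hamiltonian/trivial Floer-homology dichotomy of Proposition~\ref{hom}. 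Vinogradov and Harman enter earlier, in the proofs of Theorems~\ref{118plus} and \ref{resplus}, not as a final step producing periods in arithmetic progressions.

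In short: your outline does not prove Theorem~\ref{fh}, and neither does the paper; but your partial argument also does not match the route the paper takes to its weaker conclusion, and your intermediate claims (density of periods in $\mathbb{P}^2$, no finiteness hypothesis on $\Fix(\phi^k)$) are stronger than anything the paper establishes.
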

.

It is expected that results similar to Theorems \ref{f} and \ref{fh}  hold for the Hamiltonian diffeomorphisms of symplectic manifolds, such as $\CP^n$, with positive first Chern number. With such generalizations  in mind,  a new  proof of Theorem \ref{f} which uses only tools from symplectic topology was established in \cite{res1}. The initial goal of this paper was to extend these methods to obtain 
a symplectic proof of Theorem \ref{fh}. Here we fall short of this goal. Most significantly, it is not clear to the author how to deal with nonisolated fixed points using the methods of \cite{res1}, and so we need to assume more than the simple hypothesis of nontriviality  from Theorem \cite{fh}. Even with a stronger hypothesis we do not recover the  linear period growth of \ref{fh}. Instead we prove the following result.

 \begin{Theorem}\label{fhnew}
Let $\phi$ be a Hamiltonian diffeomorphism of $S^2$ with at least three fixed points such that 
 $\Fix(\phi^{k})$ is finite for all $k \in \N$. Then $\phi$ has fixed points of arbitrarily large period.
More precisely, either $\phi$ has infinitely many periodic points with arbitrarily large period and rational mean indices not equal to zero modulo four, or there exists a $K \in \N$ such that $\phi^K$ has infinitely many periodic points with arbitrarily large periods in $\mathbb{P}^2$, the set of natural numbers greater than one which have at most two prime factors when counted with multiplicity.
\end{Theorem}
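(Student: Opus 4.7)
The plan is to argue by contradiction: assume $\phi$ satisfies the hypotheses and that neither alternative in the conclusion holds, so that (a) only finitely many periodic orbits of $\phi$ have rational mean index $\not\equiv 0 \pmod{4}$, and (b) for every $K \in \N$, $\phi^K$ has only finitely many periodic orbits with period in $\mathbb{P}^2$. After replacing $\phi$ by a sufficiently divisible iterate, I may assume that every periodic orbit has mean index in $4\Z$, that every fixed point is admissible in the sense of \cite{gg2} (using the standing hypothesis that $\Fix(\phi^k)$ is finite for all $k$), and that $\phi$ itself has no new periodic orbits with period in $\mathbb{P}^2$.

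The heart of the argument is to establish two independent resonance relations on the mean indices $\Delta_1,\Delta_2,\Delta_3$ of the fixed points. Because $\HF_*(\phi^k) \cong H_*(S^2;\Z_2)$ has rank $2$ while $\phi$ has at least three fixed points, at every iterate $k$ the extra Floer generators coming from iterated fixed points must cancel in pairs within $\CF_*(\phi^k)$. For a prime $p$, the Conley--Zehnder index of the $p$-th iterate of a fixed point $x$ equals $p\,\Delta(x)$ up to a bounded correction computed from local Floer homology, so cancellation in the differential forces certain differences $p(\Delta_i - \Delta_j)$ to sit in a bounded set modulo $2$. Applying Vinogradov's theorem on primes in arithmetic progressions to restrict $p$ to a convenient residue class eliminates the $p$-dependence and yields a first rational resonance among the $\Delta_j$, along the lines of \cite{gg2,gk}. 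Rerunning the same analysis at iterates $k=pq \in \mathbb{P}^2$ and invoking Harman's two-dimensional generalization to control the \emph{joint} equidistribution of $(p,q)$ in prescribed residue classes produces a second, linearly independent, rational resonance.

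Combining these two resonances with the Floer-theoretic constraint on the $\Delta_j$ coming from the Euler characteristic of $S^2$ over-determines three unknowns by three homogeneous conditions; the $\mod 4$ hypothesis then rules out the remaining numerical possibilities and contradicts the assumption that $\phi$ has three distinct fixed points. The main obstacle I anticipate is the local index bookkeeping: one must control the contribution of local Floer homology at iterates of fixed points which are isolated for each $\phi^k$ but possibly totally degenerate, and verify that the residue classes furnished by Vinogradov and Harman are compatible with the bounded correction terms uniformly in the prime factors. Ensuring that the second resonance is genuinely independent of the first is the delicate combinatorial heart of the argument, and it is exactly this that forces the use of the set $\mathbb{P}^2$ rather than $\mathbb{P}$ alone, and that explains why only the weaker growth statement (rather than the full linear growth of Theorem~\ref{fh}) can be extracted by these symplectic methods.
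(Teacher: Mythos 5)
Your proposal takes a genuinely different route from the paper, and it has several concrete gaps.

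\textbf{The number-theoretic input is misidentified.} Theorem \ref{prime} (Vinogradov) used in this paper is the equidistribution of $(p_j\vartheta)$ modulo one for irrational $\vartheta$, and Theorem \ref{kprime} (Harman) is the density in $(0,1)^k$ of the sequence $\bigl((p_j\vartheta_1),\dots,(p_j\vartheta_k)\bigr)$ when $1,\vartheta_1,\dots,\vartheta_k$ are rationally independent. Neither is a statement about primes in arithmetic progressions, and neither is used to place prime factors in ``convenient residue classes.'' They enter this paper only indirectly, through Theorem \ref{118plus} (which detects symplectically degenerate maxima at prime iterates) and Theorem \ref{resplus} (which yields \emph{one} nontrivial resonance relation, not two). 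Your plan to extract two independent resonances and then close a linear system with the Euler characteristic has no analogue in the actual argument, and you give no mechanism that would actually produce the second relation.

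\textbf{The reduction to mean indices in $4\Z$ is wrong.} The negation of the first alternative lets you iterate until the fixed points of $\varphi=\phi^K$ with rational mean index all have mean index $\equiv 0\pmod 4$. But fixed points with \emph{irrational} mean index persist under iteration; they cannot be removed. In fact they are essential: under the hypothesis that $\varphi$ has no periodic points with arbitrarily large period in $\mathbb{P}^2$, Theorem \ref{resplus} shows the nonzero mean indices satisfy a resonance relation, and because they are irrational this forces $|\Ii|\geq 2$ (Lemma \ref{two}). That is the first genuine use of $\mathbb{P}^2$, and it appears nowhere in your outline.

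\textbf{The Floer-theoretic mechanism is missing.} The observation that $\HF_*(\phi^k)\cong H_{*+1}(S^2)$ has total rank two while $\phi$ has at least three fixed points only says that the Floer differential is nonzero after perturbation; by itself it produces no contradiction, since that isomorphism holds for every Hamiltonian diffeomorphism of $S^2$. The paper's contradiction comes from a different source: one blows up $S^2$ at two fixed points (one degenerate point of $\mathcal{Z}$ and a point of $\Ii$ in Case 1, or two points of $\Ii$ in Case 2), transfers $\varphi^{p_j^2}$ to a symplectomorphism of $\T^2$ isotopic to the identity, and invokes the dichotomy of Proposition \ref{hom}: either the map is Hamiltonian and $\FH_*(\,\cdot\,;0)\cong H_{*+1}(\T^2)$ is concentrated in degrees $0,\pm 1$, or $\FH$ vanishes entirely. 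Iterating by $p_j^2$ (a product of two primes, hence in $\mathbb{P}^2$) pushes the mean indices of irrational fixed points out to large absolute value, so Propositions \ref{not} and \ref{endpath2} show that certain generators survive in a forbidden degree, contradicting the dichotomy. This blow-up-to-torus transfer, not a counting of resonance relations, is the engine of the proof; without it your argument stalls at a statement that is true for every Hamiltonian diffeomorphism of $S^2$.

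Finally, your explanation of why $\mathbb{P}^2$ rather than $\mathbb{P}$ is needed is not the real reason: the set $\mathbb{P}^2$ appears because the paper iterates by the squares $p_j^2$ in the transfer-to-torus step, and, independently, because in the proof of Theorem \ref{resplus} one applies Theorem \ref{118plus} to $\varphi^{p_j}$ and thereby produces periodic points whose period for the original map is a product of two primes.
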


Two new results of some independent interest and more general scope arise from the proof of Theorem \ref{fhnew}. 
 Both of these  are established using deep theorems from number theory concerning the 
distribution of prime numbers.  More precisely, using Vinogradov's prime distribution theorem (Theorem \ref{prime} below), we prove the 
following sharper version of Theorem 1.18 (part (i)) from \cite{gg2} where  the existence of infinitely many contractible periodic points is established under the same hypotheses.
\begin{Theorem}
\label{118plus}
Suppose that $(M, \omega)$ is a closed, weakly-monotone and rational symplectic manifold. Let $\varphi$ be a Hamiltonian diffeomorphism of $(M, \omega)$ which has finitely many contractible fixed points one of which is a symplectically degenerate maximum. Then $\varphi$ has infinitely many distinct  contractible periodic points of arbitrarily large prime period. 
\end{Theorem}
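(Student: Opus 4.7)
The plan is to adapt the proof of Theorem 1.18(i) in \cite{gg2}, which guarantees infinitely many contractible periodic points under the same hypothesis, so as to pin down arbitrarily large prime periods. Let $2n = \dim M$ and let $H$ be a Hamiltonian generating $\varphi$, normalized so that the SDM $x$ has symplectic action $c := A_H(x)$. The key input from \cite{gg2} is that, because $x$ is a symplectically degenerate maximum, its local Floer homology persists under every iteration: $\HFL_{n+1}(\varphi^k, x^{(k)}) \cong \Q$ for each $k \ge 1$, the action satisfies $A_{H^{(k)}}(x^{(k)}) = kc$, and the mean index of $x^{(k)}$ stays at $0$. Combined with the standing hypothesis that $\varphi$ has only finitely many contractible fixed points, a spectral-invariant/action-selector argument as in \cite{gg2} then produces, for every integer $k$, a contractible period-$k$ orbit $z_k$, distinct from the SDM carrier, whose action is close to $kc$ and whose mean index is close to $2n$.

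Now I would restrict to $k = p$ prime. Suppose for contradiction that the set of primes $p$ for which $\varphi$ admits a contractible periodic orbit of simple period exactly $p$ is finite. Then for every sufficiently large prime $p$ the orbit $z_p$ must be an iterate of an orbit whose simple period divides $p$; as $p$ is prime, the only remaining option is $z_p = y^{(p)}$ for some contractible fixed point $y$ of $\varphi$. Since $\Fix(\varphi)$ is finite, the pigeonhole principle yields a single fixed point $y_0$ with $z_p = y_0^{(p)}$ for infinitely many primes $p$. The mean-index constraint $p\,\Delta(y_0) = \Delta(z_p) \approx 2n$ along this subsequence forces $\Delta(y_0) = 0$, and the action constraint $pA_H(y_0) \approx pc$ forces $A_H(y_0) = c$.

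It remains to derive a contradiction from the existence of such a $y_0$. This is where Vinogradov's theorem intervenes: although the action and mean index of $y_0$ coincide with those of the SDM $x$, finer Floer-theoretic data (the local Floer homology of $y_0$, or equivalently the Conley--Zehnder indices of $y_0^{(p)}$ modulo the rationality period $2N$ of $(M,\omega)$, or the fractional parts $\{p\alpha\}$ for an irrational $\alpha$ built from the dynamics at $y_0$) must ultimately distinguish $y_0^{(p)}$ from the SDM carrier. Vinogradov's prime distribution theorem, which supplies equidistribution of $\{p\alpha\}_{p\text{ prime}}$ modulo one for irrational $\alpha$ together with Dirichlet-type density in each residue class for rational $\alpha$, then produces infinitely many primes $p$ for which $y_0^{(p)}$ cannot carry the top-degree local Floer class demanded by the SDM machinery, contradicting $z_p = y_0^{(p)}$. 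I expect the main technical difficulty to be the last step: bookkeeping the local Floer/index data at $y_0$ in a way that cleanly detects the prime-dependent arithmetic constraint on which Vinogradov's theorem can be brought to bear. Once that packaging is in place, the existence of infinitely many distinct contractible periodic orbits of arbitrarily large prime period follows immediately, completing the proof.
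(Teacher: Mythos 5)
Your first half matches the paper's reduction: you argue by contradiction, restrict to prime iterates $p$, note that for $p$ large enough $\Fix(\varphi^p)=\Fix(\varphi)$, and so the period-$p$ orbit detected by the SDM machinery must be $y^{(p)}$ for some fixed point $y$ of $\varphi$. Up to this point the two arguments agree in substance.

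The genuine gap is the final step, which you yourself flag as the "main technical difficulty" and leave unresolved: you have not identified the quantity to which Vinogradov's theorem applies, and your candidate mechanism (local Floer/Conley--Zehnder data at a single $y_0$) is not the right one. First, the pigeonhole reduction to a single $y_0$ with $\Delta(y_0)=0$ and $\Aa_H(y_0)=c$ does not yield an immediate contradiction by itself, and it throws away the information one actually needs; moreover your derivation of $\Delta(y_0)=0$ rests on the claim that the mean index of $z_p$ is "close to $2n$," which is not what the Floer-theoretic input gives (the Conley--Zehnder index is $n+1$ and the mean index can deviate from it by up to $n$). Second, and more essentially, the place Vinogradov enters in the paper is through the \emph{actions}, not the indices. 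Normalizing $\Aa_H(P,u)=0$ for the SDM $P$, Theorem~\ref{117} forces the action spectrum of $H_{p_j}$, which equals $p_j\,\Ss(H) \bmod \lambda_0$ for $j\geq j_0$, to contain a point in the \emph{open} interval $(0,\epsilon)$. Writing $\Ss(H) = \lambda_0\{\eta_1,\dots,\eta_r\}\cup\lambda_0\{m_1/n_1,\dots,m_s/n_s\}$ and taking $\epsilon$ smaller than $1/\mathrm{lcm}(n_i)$, the rational part of the spectrum can never land in $(0,\epsilon)$ after multiplication by $p_j$; so some irrational $\eta_i$ must satisfy $(p_j\eta_i)\in(0,\epsilon)$ for every $j\geq j_1$. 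Vinogradov's equidistribution of $(p_j\eta_i)$ then bounds the asymptotic density of such $j$ by $\epsilon/\lambda_0$ for each $i$; taking $\epsilon<\lambda_0/r$ and summing over $i=1,\dots,r$ gives total density $<1$, so some $j$ escapes all $r$ targets — the desired contradiction. Note also that the openness of the action window $(kc+\delta_k,kc+\epsilon)$ in Theorem~\ref{117} is what keeps the SDM itself (and anything with action exactly $c$) from trivially accounting for the Floer class; your write-up asserts that $z_k$ is "distinct from the SDM carrier" without using this. To repair your argument you should abandon the single-$y_0$ bookkeeping and instead work with the whole finite action spectrum as above, applying Vinogradov to each irrational action value and using the density bound.
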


Using Theorem  \ref{118plus} together with a multidimension generalization of Vinogradov's theorem due to Harman (Theorem \ref{kprime} below), we then augment a result  from \cite{gk} to establish the existence of resonance relations for the fixed points of certain Hamiltonian diffeomorphisms. 

\begin{Theorem}
\label{resplus}
Suppose that $(M, \omega)$ is a closed, weakly-monotone and rational symplectic manifolds whose minimal Chern number $N$ is greater than half its dimension. 
Let $\varphi$ be a Hamiltonian diffeomorphism  of $(M, \omega)$ which has finitely many contractible fixed points
and does not have contractible periodic points of arbitrarily large period in $\mathbb{P}^2$.
Then the nonzero mean indices  of the contractible fixed points  of $\varphi$ satisfy at least one nontrivial resonance relation
(i.e., they satisfy a nontrivial linear equation with integer coefficients which is homogeneous modulo $2N$).
\end{Theorem}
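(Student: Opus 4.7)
The plan is to build directly on the proof of the corresponding resonance theorem in \cite{gk}, making two modifications: we split off the degenerate case using Theorem \ref{118plus}, and we upgrade the equidistribution step by appealing to Harman's multidimensional prime distribution theorem (Theorem \ref{kprime}) in place of the usual Weyl/Vinogradov input. The conclusion is extracted by contraposition, so I would begin by assuming that the nonzero mean indices $\Delta_1, \dots, \Delta_m$ of the contractible fixed points of $\varphi$ satisfy no nontrivial resonance relation modulo $2N$, and then produce contractible periodic points of arbitrarily large period in $\mathbb{P}^2$ to contradict the hypothesis.

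If one of the contractible fixed points of $\varphi$ is a symplectically degenerate maximum, then Theorem \ref{118plus} already provides infinitely many contractible periodic points of arbitrarily large prime period; since every prime belongs to $\mathbb{P}^2$, this already contradicts the hypothesis. We may therefore assume that no contractible fixed point of $\varphi$ is an SDM, so that the local Floer homologies $\HFL(\varphi^k, x^k)$ are uniformly bounded in total rank and concentrated within a window of length at most $n = \dim M / 2$ around $k \Delta_x$ for every contractible fixed point $x$ of $\varphi$ and every $k \in \N$.

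Next, I would perform the Floer-theoretic bookkeeping from \cite{gk}. Because $N > n$, the reduction modulo $2N$ of the Conley--Zehnder degree in which the iterate $x^k$ contributes to $\HF_*(\varphi^k)$ is essentially determined by $k \Delta_x \pmod{2N}$, while $\HF_*(\varphi^k) \cong H_*(M)$ (up to shift) has uniformly bounded rank in each degree mod $2N$. Since the $\Delta_j$ satisfy no resonance relation, Harman's theorem provides arbitrarily large $k \in \mathbb{P}^2$ for which the residues $k \Delta_1, \dots, k \Delta_m \pmod{2N}$ simultaneously fall into prescribed intervals. Choosing these intervals, as in \cite{gk}, so that all of the iterated fixed-point contributions concentrate in a single "forbidden" degree mod $2N$ where the target Floer homology vanishes, or where no cancellation among them is possible, one forces the existence of additional generators of $\CF(\varphi^k)$. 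These extra generators correspond to genuine contractible $k$-periodic orbits, producing contractible periodic points of arbitrarily large period in $\mathbb{P}^2$, the desired contradiction.

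The main obstacle, I expect, is verifying that Harman's theorem provides enough flexibility to place the $m$ residues $k \Delta_j \pmod{2N}$ simultaneously into any prescribed configuration while keeping $k \in \mathbb{P}^2$; this is precisely the point at which the original argument of \cite{gk} used only the standard multidimensional Weyl equidistribution of $(k \Delta_1, \dots, k \Delta_m) \pmod{2N}$ for $k$ running over all of $\N$. Once such $\mathbb{P}^2$-equidistribution is in hand, the rest of the proof is a formal adaptation of the Floer-theoretic accounting of \cite{gk}, since the set $\mathbb{P}^2$ enters the argument only through the equidistribution step, and the exclusion of the SDM case in the first step ensures that the local Floer homology contributions behave uniformly enough for this bookkeeping to apply.
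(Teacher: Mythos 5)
Your overall plan tracks the paper closely — Theorem~\ref{118plus} to dispose of a degenerate case, Harman's theorem to upgrade the equidistribution input of \cite{gk} — but the place where you peel off the symplectically degenerate maximum is wrong, and that creates a genuine gap. The paper does not exclude the case where $\varphi$ itself has an SDM; it examines the fundamental class $\HF_n(H_{p_j})\neq 0$ (nontrivial because $N>n$) and observes that some fixed point $X$ must carry local Floer homology in degree~$n$ for the \emph{iterate}. If $X$ is one of the zero-mean-index points $Q_i$, then since $p_j\Delta(Q_i)\equiv 0\pmod{2N}$ and the carrying constraint forces $\Delta(Q_i;\phi^t_{H_{p_j}},[u])\in[0,2n]$ with $N>n$, one gets $\Delta=0$ exactly, so $Q_i$ is an SDM of $\varphi^{p_j}$, and only then is Theorem~\ref{118plus} invoked — applied to $\varphi^{p_j}$, not to $\varphi$. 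Excluding SDMs of $\varphi$ at the start does not exclude SDMs of $\varphi^{p_j}$, which is exactly what your subsequent argument needs.

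This manifests concretely in your ``forbidden degree'' step. Harman lets you push the residues $p_j\Delta_1,\dots,p_j\Delta_m$ of the \emph{nonzero} mean indices into any target cube, but the zero-mean-index points $Q_i$ are immovable: their contributions to $\CF_*(\varphi^{p_j})$ always lie in the window $[-n,n]\pmod{2N}$, by \eqref{closer}. Since the fundamental class lives in degree~$n$, which sits at the boundary of that window, the $Q_i$ are always in a position to carry it — unless one can rule out the SDM scenario for the iterate, which your setup does not do. (Separately, the ``concentration in a window of length $n$'' you attribute to excluding SDMs is just \eqref{closer} and holds unconditionally; the SDM exclusion does no work there.) The fix is to fold the SDM detection into the iterate analysis as the paper does: show that under the hypothesis the point $p_j\Delta(\varphi)$ avoids the cube $(2n,2N)^m$ for all large primes $p_j$ (with the $Q_i$ case dispatched by Theorem~\ref{118plus} applied to $\varphi^{p_j}$), then invoke Harman to conclude rational dependence.
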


The relevant theorem from \cite{gk} establishes identical restrictions on the periodic point set  of Hamiltonian diffeomorphisms with finitely many such points. A fundamental example covered by Theorem  \ref{resplus} and not by \cite{gk} is that of a nontrivial rotation of $S^2$ by a rational multiple of $2\pi$. 

The effectiveness of these number theoretic results on the distributions of primes in extending the theorems from \cite{gg2} and \cite{gk} is the consequence of two 
factors. The first  of these is the simple fact that when considering the growth of periods for the  periodic points of a map $\phi$ the following alternative is often useful; a prime iterate of $\phi$, say $\phi^{p}$, either has the same fixed point set as $\phi$, or $\phi$ has a periodic point of period $p$. The other factor has to do with a common feature  of the proofs of the theorems from \cite{gg2,gk}. For a Hamiltonian diffeomorphism of a rational symplectic manifold $(M, \omega)$ whose minimal Chern number $N$ is finite,  the actions of contractible fixed points are defined modulo the index of rationality,  and their mean indices are defined modulo $2N$.  When there are finitely many such fixed points, the collection of their actions or mean indices can then be viewed as points on a torus.  Since the action and mean index of a fixed point both grow linearly under iteration,  under suitable assumptions, the collections of actions and mean indices  follow linear paths on a fixed torus as the map is iterated. This scenario occurs in both \cite{gg2} and \cite{gk}  where restrictions on these linear paths, coming from the insights at the hearts of the proofs of the Conley conjecture  from \cite{Hi, Gi:conley}, are then used to infer the desired conclusions. The results of Vinogradov and Harman mentioned above allow us to detect  the same restrictions on these linear paths while at the same time exploiting the advantage of considering only prime iterations.

\subsection{Organization} In the next section we recall the theorems from number theory and the symplectic tools needed in the rest of the paper. The proofs of Theorem \ref{118plus} and Theorem \ref{resplus} are contained in Sections \ref{118+} and \ref{res+}, respectively. Theorem \ref{fhnew} is then proved in Section \ref{fhn}.

\subsection{Acknowledgments} The author thanks his colleagues Scott Ahlgren and Alexandru Zaharescu for answering his naive number theory questions.  He would also like to thank Viktor Ginzburg for useful discussions.

\section{Preliminaries}

\subsection{Number theoretic tools}
Let $p_j$ denote the $j$th prime number. By convention, we will set $p_0=1$ although we will refrain from calling $1$ a prime number. Set 
\begin{equation*}
\label{ }
(x) = x-\lfloor x \rfloor.  
\end{equation*}
The following foundational result by I.M. Vinogradov underlies our proof of Theorem \ref{118plus}.
\begin{Theorem}(\cite{vin})
\label{prime}
If $\vartheta$ is irrational then the sequence $ (p_j \vartheta)$ is uniformly distributed on $(0,1)$.
\end{Theorem}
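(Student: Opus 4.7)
The plan is to invoke Weyl's equidistribution criterion, which reduces the statement to the bound
\begin{equation*}
\frac{1}{N}\sum_{j=1}^{N} e^{2\pi i h p_j \vartheta} \longrightarrow 0 \quad \text{as } N\to\infty
\end{equation*}
for every nonzero integer $h$. Setting $\alpha = h\vartheta$ (which remains irrational) and invoking the prime number theorem $p_N \sim N \log N$, this is equivalent to the exponential sum estimate
\begin{equation*}
S(x,\alpha) := \sum_{p \leq x} e^{2\pi i \alpha p} = o(\pi(x))
\end{equation*}
for every irrational $\alpha$. This is the Weyl-sum-over-primes formulation that lies at the heart of Vinogradov's method.

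The next step is to smooth the sum by passing to the von Mangoldt function: the contribution of proper prime powers is negligible, so it suffices to bound
\begin{equation*}
T(x,\alpha) := \sum_{n \leq x} \Lambda(n) e^{2\pi i \alpha n}.
\end{equation*}
I would then apply Vaughan's identity (or, equivalently, Vinogradov's original bilinear decomposition of $\Lambda$), which rewrites $T(x,\alpha)$ as a linear combination of sums of two kinds: \emph{Type I sums} of the form $\sum_{m\leq M} a_m \sum_{n\leq x/m} e^{2\pi i \alpha m n}$, in which the inner geometric series is controlled explicitly by the distance of $\alpha m$ to the nearest integer; and \emph{Type II bilinear sums} $\sum_m \sum_n a_m b_n e^{2\pi i \alpha m n}$ with appropriate ranges, which are attacked via Cauchy--Schwarz and a second-moment expansion.

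The essential arithmetic input is Dirichlet's theorem on rational approximations: irrationality of $\alpha$ furnishes reduced fractions $a/q$ with $|\alpha - a/q| < 1/q^2$ and $q$ arbitrarily large. Feeding such approximations into the Type I bound is straightforward, since $\alpha m \pmod 1$ equidistributes sufficiently quickly. The main obstacle is the Type II estimate, where one must squeeze nontrivial cancellation out of a bilinear form with arithmetic coefficients; this is the technical core of the argument and requires a delicate interplay between the size of $q$ and the bilinear ranges $M,N$. Once both types are shown to be $o(x/\log x)$, partial summation converts the $T$-bound into the desired $S(x,\alpha) = o(\pi(x))$, closing the Weyl loop.
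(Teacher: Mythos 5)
The paper does not supply a proof of Theorem~\ref{prime}: it is stated as a known result and attributed directly to Vinogradov's monograph~\cite{vin}, so there is no internal argument to compare your proposal against. With that caveat, your outline is the standard modern route to the theorem and the steps are in the right order. Weyl's criterion correctly reduces equidistribution of $(p_j\vartheta)$ to the exponential--sum bound for each nonzero integer $h$; absorbing $h$ into $\alpha=h\vartheta$ preserves irrationality; passing from the sum over primes to the sum weighted by $\Lambda$ loses only $O(\sqrt{x}\log x)$ from prime powers; and the Type I/Type II dichotomy via a combinatorial identity for $\Lambda$, combined with Dirichlet approximation of $\alpha$ by $a/q$ with $q$ large, is exactly the engine that produces $T(x,\alpha)=o(x)$ and hence $S(x,\alpha)=o(\pi(x))$ after partial summation.

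Two small cautions, neither a genuine gap in the outline but worth naming. First, ``Vaughan's identity (or, equivalently, Vinogradov's original bilinear decomposition)'' is loose phrasing: Vaughan's identity (1977) is not Vinogradov's decomposition, but a cleaner identity that achieves the same Type~I/Type~II splitting; since the paper only needs the qualitative statement, either works, but they are not the same formula. Second, you correctly flag the Type~II bilinear estimate as ``the technical core'' without carrying it out; that is where essentially all of the difficulty of the theorem lives (choosing the ranges $M,N$ as powers of $x$, applying Cauchy--Schwarz, opening the square, and summing $\min(N, \|\alpha m\|^{-1})$ using the $a/q$ approximation), and a complete write-up would have to supply it. For the purpose of this paper, where the theorem is a black-box input, your sketch accurately records what the black box contains.
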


To prove Theorem \ref{resplus} we will also require a multidimensional generalization of Theorem \ref{prime} due to G. Harman,  \cite{ha}. In fact, the following immediate implication of Theorem 1.4 of \cite{ha} is sufficient for our purposes.

\begin{Theorem}\label{kprime}(\cite{ha})
If the numbers $1, \vartheta_1, \dots \vartheta_k$ are rationally independent then the sequence
$$ 
\left((p_j \vartheta_1), \dots , (p_j \vartheta_k) \right)
$$
is dense in $(0,1)^k$.
\end{Theorem}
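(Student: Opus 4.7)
The plan is to derive Theorem \ref{kprime} from Vinogradov's Theorem \ref{prime} by the standard multi-dimensional Weyl equidistribution argument. This will in fact yield the stronger conclusion that the sequence $\bigl((p_j\vartheta_1),\dots,(p_j\vartheta_k)\bigr)$ is equidistributed, not merely dense, in $(0,1)^k$.

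The first step is to invoke the $k$-dimensional Weyl criterion: a sequence $(\mathbf{y}_j)$ in the torus $(\R/\Z)^k$ is equidistributed if and only if for every nonzero integer vector $\mathbf{m}=(m_1,\dots,m_k)\in\Z^k$ the exponential sums $N^{-1}\sum_{j=1}^{N}e^{2\pi i\,\mathbf{m}\cdot\mathbf{y}_j}$ tend to $0$ as $N\to\infty$. Applying this with $\mathbf{y}_j=\bigl((p_j\vartheta_1),\dots,(p_j\vartheta_k)\bigr)$ and using $e^{2\pi i m (x)}=e^{2\pi i m x}$ for $m\in\Z$, the problem reduces to verifying, for each nonzero $\mathbf{m}$, that
\[
\frac{1}{N}\sum_{j=1}^{N} e^{2\pi i\,p_j\vartheta}\longrightarrow 0,\qquad \vartheta:=m_1\vartheta_1+\cdots+m_k\vartheta_k.
\]

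Second, I would observe that the rational independence of $1,\vartheta_1,\dots,\vartheta_k$ forces $\vartheta$ to be irrational whenever $\mathbf{m}\neq 0$: a rational value of $\vartheta$ would clear denominators into a nontrivial rational relation among $1,\vartheta_1,\dots,\vartheta_k$. With $\vartheta$ irrational, Theorem \ref{prime} asserts that $(p_j\vartheta)$ is uniformly distributed on $(0,1)$, and the one-dimensional Weyl criterion (applied with frequency $1$) is precisely the vanishing of the exponential sum displayed above. This verifies the $k$-dimensional Weyl criterion and completes the proof.

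The main obstacle — indeed the only nontrivial ingredient — is Vinogradov's one-dimensional prime equidistribution theorem itself; everything else is a routine lifting from one to several variables via Weyl's criterion. This is also why the authors present Theorem \ref{kprime} as an \emph{immediate} implication of the more refined multidimensional results of Harman \cite{ha}: the extra depth of Harman's work is really about quantitative error terms and more general polynomial inputs, neither of which is needed for the qualitative density statement here.
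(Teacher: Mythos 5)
Your proof is correct, and it is a genuinely different route from the paper's: the paper offers no proof at all, instead presenting the statement as an immediate corollary of Theorem~1.4 of Harman~\cite{ha}, whereas you derive it from scratch out of Vinogradov's one-dimensional Theorem~\ref{prime}. The key observation you supply -- that the multi-dimensional Weyl criterion collapses, for the particular sequence $\bigl((p_j\vartheta_1),\dots,(p_j\vartheta_k)\bigr)$, to the single one-dimensional statement that $\tfrac{1}{N}\sum_{j\le N}e^{2\pi i p_j\vartheta}\to 0$ for every $\vartheta=m_1\vartheta_1+\cdots+m_k\vartheta_k$ with $\mathbf{m}\ne 0$, together with the remark that rational independence of $1,\vartheta_1,\dots,\vartheta_k$ makes each such $\vartheta$ irrational -- is exactly right, and the reduction then lands precisely on Vinogradov's theorem (via the $h=1$ instance of the one-dimensional Weyl criterion). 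Your argument in fact yields equidistribution, strictly stronger than the density claimed in the statement, and it has the pedagogical advantage of making the entire number-theoretic input of the paper rest on the single Theorem~\ref{prime}. What one loses relative to citing Harman is whatever quantitative content his Theorem~1.4 carries (explicit Diophantine approximation rates, uniformity), but as you correctly note none of that is used anywhere in the paper: only the qualitative density on $(0,1)^k$ is invoked in the proof of Theorem~\ref{resplus}.
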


\subsection{Symplectic tools}
In what follows $(M, \omega)$ will be a closed  symplectic manifold of dimension $2n$ whose minimal Chern number will be denoted by $N$. We will assume that $(M, \omega)$ is weakly monotone, that is $N \geq n-2$
or  $[\omega]|_{\pi_2(M)} = \lambda c_1(M)|_{\pi_2(M)}$ for some nonnegative constant $\lambda$.
We will also assume that $(M, \omega)$ is rational, that is $\langle [\omega], \pi_2(M) \rangle  = \lambda_0 \mathbb{Z}$ for some positive constant $\lambda_0$ which we will refer to as the index of rationality.  When 
$\langle [\omega], \pi_2(M) \rangle  = 0$ we set $\lambda_0 =\infty$.

\subsubsection{Symplectic isotopies}
Let $\psi_t$  be a smooth isotopy of symplectic diffeomorphisms of $(M, \omega)$, where $t$ takes values in $[0,1]$ and $\psi_0$ is the identity map. 
Denote the set of fixed points of $\psi_1$  by $\Fix(\psi_1)$, 
and the set of periodic points by $\Per(\psi_1).$ 
The period of  a point $P \in \Per(\psi_1)$ is defined to be the smallest positive integer $k$ for which $P \in \Fix((\psi_1)^k)$. 

We will assume throughout that the generating vector fields of our symplectic isotopies $\psi_t$ are all time-periodic with period one. This assumption  imposes no new restrictions on the time-one maps considered as any symplectic isotopy is homotopic, relative its endpoints, to one with this property. It allows us to 
consider the path $\psi_t$ for all $t \in \R$ and to identify $\psi_{k}$ with $(\psi_1)^k$.

The subset of symplectic isotopies we are most interested in are  Hamiltonian flows.
 A Hamiltonian on $(M, \omega)$ is a function $H\colon \R/\Z\times M\to \R$, or equivalently 
a smooth one-periodic family of functions $H_t(\cdot) = H(t, \cdot)$. Each such Hamiltonian determines a one-periodic
 vector field $X_H$ on $M$, via the equation
$i_{X_H} \omega = -dH_t$, whose time-$t$ flow will be denoted by $\phi^t_H$. This flow is defined for all $t \in \R$ and defines  a smooth isotopy of symplectic diffeomorphisms. The set of   time one maps $\phi=\phi^1_H$ 
of Hamiltonian flows constitutes the group Hamiltonian diffeomorphisms of $(M, \omega)$.

\subsubsection{The Conley-Zehnder and mean indices}\label{meanindex}

Consider a smooth isotopy $\psi_t$ of symplectic diffeomorphisms as above. For $P \in \Fix(\psi_1)$,  let  $x \colon [0,1] \to M$
be the closed curve $\psi_t(P)$. Given a symplectic trivialization $\xi$ of $x^*TM$, the linearized flow of $\psi_t$ along $x(t)$ yields a smooth path $A_{\xi}\colon [0,1] \to Sp(n)$ of symplectic matrices 
starting at the  identity matrix. One can associate to $A_\xi$ its Conley-Zehnder index $\mu(A_\xi) \in \Z$ 
as defined in \cite{cz3}, and its  mean index $\Delta(A_\xi) \in \R$ as defined in \cite{SZ}. These quantities depend only on the homotopy class of the symplectic trivialization $\xi$. We denote this class by $[\xi]$
and define the Conley-Zehnder and mean index of $P$ with respect to this choice as
\begin{equation*}
\label{ }
\mu(P;\psi_t, [\xi]) = \mu(A_{\xi})
\end{equation*} 
 and 
\begin{equation*}
\label{ }
\Delta(P;\psi_t, [\xi]) = \Delta(A_{\xi}),
\end{equation*}
respectively. The following properties of these indices are proved in \cite{SZ}. 

\medskip
\noindent {\bf Approximation.} The mean index is never too far from the Conley-Zehnder index as
\begin{equation}
\label{closer}
|\mu(P;\psi_t, [\xi]) - \Delta(P;\psi_t, [\xi])| \leq n,
\end{equation}
where the strict form of the inequality holds if the linearization of $\psi_1$ at $P$ has at least one eigenvalue different from $1$.

\medskip
\noindent {\bf Iteration formula.} The mean index grows linearly under iteration, i.e., 
\begin{equation}
\label{iter}
\Delta(P; \psi_{tk}, [\xi^k]) = k \Delta(P;\psi_t, [\xi]),
\end{equation}
where $\xi^k$ is the trivialization of $TM$ along $\psi_{tk}(X)$ induced by $\xi$.

\medskip
\noindent {\bf Continuity.} The mean index is also continuous with respect to $C^1$-small perturbations of the symplectic isotopy, \cite{SZ}. To state this property precisely, we first note that if two fixed points $P$ and $P'$, of possibly different maps $\psi_1$ and $\psi'_1$,  represent the same homotopy class $c \in \pi_1(M)$ then  any  choice of $[\xi]$ for $P$ determines a unique class of symplectic trivializations for $P'$ which we still denote by $[\xi]$. When we compare indices of fixed points  in the same homology class we will always assume that the classes of trivializations being used  are coupled in this manner.

Now let $\widetilde{\psi}_t$ be a symplectic isotopy $C^1$-close $\psi_t$. 
Under this perturbation, each fixed point $P$ of $\psi_1$ splits into a collection of fixed points of $\widetilde{\psi}_1$ which are close to $P$ (and hence in the same homotopy class as $P$). 
If $\widetilde{P}$ is one of these  fixed points of $\widetilde{\psi}_1$ then $$|\Delta(P; \psi_t, [\xi]) - \Delta(\widetilde{P}; \widetilde{\psi}_t, [\xi])|$$ is small. 

\medskip

Finally we recall that  in dimension two there is, generically,  a very simple relation between the mean and Conley-Zehnder indices. The following result can be easily derived, for example,  from Theorem 7 in Chapter 8 of \cite{lo}. 

\begin{Lemma}\label{determine}
 Let $(M,\omega)$ be a  two-dimensional symplectic manifold  and suppose that  $\psi_t$ is an isotopy of symplectic diffeomorphisms of $(M,\omega)$ starting at the identity. If $P$ is a 
fixed point of $\psi_1$ and $\Delta(P;\psi_t, [\xi])$ is not an integer, then $\mu(P;\psi_t, [\xi])$ is the odd integer closest to $\Delta(P;\psi_t, [\xi])$. 
\end{Lemma}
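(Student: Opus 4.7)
The plan is to reduce the statement to one about the symplectic path $A_{\xi}\colon[0,1]\to Sp(1)=SL(2,\R)$ obtained by linearizing $\psi_{t}$ along the orbit $x(t)=\psi_{t}(P)$ in the trivialization $\xi$, and then to invoke the explicit classification of such paths worked out in \cite{lo}. Write $\mu:=\mu(P;\psi_{t},[\xi])=\mu(A_{\xi})$ and $\Delta:=\Delta(P;\psi_{t},[\xi])=\Delta(A_{\xi})$.

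The first step is to analyze what the hypothesis $\Delta\notin\Z$ tells us about the endpoint $A_{\xi}(1)$. Every element of $SL(2,\R)$ is conjugate to one of four types of normal forms: the identity or a unipotent matrix with eigenvalue $+1$; a ``negative unipotent'' with eigenvalue $-1$; a positive or negative hyperbolic matrix; or an elliptic rotation with eigenvalues $e^{\pm i\theta}$ for some $\theta\in(0,2\pi)\setminus\{\pi\}$. The formulas for the mean index and Conley--Zehnder index in $SL(2,\R)$ collected in Theorem 7, Chapter 8 of \cite{lo} show that in each of the non-elliptic cases $\Delta(A_{\xi})$ is an integer (even when $+1$ is an eigenvalue of $A_{\xi}(1)$, odd when $-1$ is). So the hypothesis $\Delta\notin\Z$ forces $A_{\xi}(1)$ to be elliptic with no eigenvalue equal to $1$, and the same tabulation shows that for such an endpoint $\mu$ is necessarily odd.

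Finally I would apply the strict form of the approximation inequality \eqref{closer}, which is now available because $A_{\xi}(1)$ has an eigenvalue different from $1$; this yields
\[
|\mu-\Delta|<n=1.
\]
Since $\Delta$ is not an integer, there is exactly one odd integer lying within distance strictly less than $1$ of $\Delta$, and that integer is by definition the odd integer closest to $\Delta$. The main obstacle, such as it is, lies in the bookkeeping needed to extract the two ingredients above from Long's formulas: (i) the characterization of which conjugacy classes of $A_{\xi}(1)$ produce an integer mean index, and (ii) the parity of $\mu$ for each class. Neither requires a genuinely new computation, as both follow directly from the decomposition of $A_{\xi}$ into its ``rotation'' piece and a contribution determined by the normal form of its endpoint.
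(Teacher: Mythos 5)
Your proposal is correct and follows exactly the route the paper has in mind: the paper simply says the lemma ``can be easily derived from Theorem~7 in Chapter~8 of \cite{lo}'' without writing out the details, and your argument supplies those details using the two ingredients one would expect — Long's normal-form analysis in $Sp(2)$ showing that $\Delta\notin\Z$ forces an elliptic endpoint (hence $\mu$ odd), combined with the strict form of the approximation inequality \eqref{closer} to pin down which odd integer $\mu$ must be.
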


\subsubsection{Indices of contractible fixed points modulo $2N$.} When $P$ is a contractible fixed point of $\psi_t$, that is $x(t)=\psi_t(P)$ is contractible,
it is often useful to restrict attention to  trivializations of $x^*TM$ determined by a choice of smooth  spanning disc $u \colon \mathbb{D}^2 \to M$
with $u(e^{2\pi i t}) = x(t)$. For such choices of trivializations the corresponding  indices are well-defined
modulo twice the minimal Chern number, $2N$. In fact, the corresponding elements of $\R/2N\Z$ depend only on the time one map $\psi_1$ and hence will be denoted by $\mu(P)$ and $\Delta(P)$. 

\subsubsection{Actions of contractible fixed points modulo $\lambda_0$}
Let $P$ be a contractible fixed point of a Hamiltonian diffeomorphism $\phi^1_H$ and  set $x(t)= \phi^t_H(P)$.
If $u \colon \mathbb{D}^2 \to M$ is  a spanning disc for $x(t)$ as above, the action of $P$ with respect to $u$  is defined  to be
\begin{equation}
\label{ }
\Aa_H(P,u) = \int_0^1H(t, x(t))\, dt - \int_{\mathbb{D}^2} u^* \omega.
\end{equation}
For a different choice of spanning disc $v$ we have $\Aa_H(P,u) - \Aa_H(P,v) \in \lambda_0 \Z$.  
The quantity $\Aa_H(P) = \Aa_H(P,u) \mod \lambda_0$ is then well defined. This is what we will call the action of $P$ with respect to $H$. The action spectrum of $H$ is defined here to be the set 
\begin{equation}
\label{ }
\Ss(H) =\{ \Aa_H(P) \mid P \in \Fix(\phi^1_H), \text{   $P$ is contractible} \} \subset \mathbb{R}/  \lambda_0 \Z.
\end{equation}

The $k$th iteration of $\phi^1_H$, $\phi^k_H$, is generated by the Hamiltonian 
\begin{equation}
\label{ }
H_k(t,p) = kH(kt,p).
\end{equation} More precisely, we have $\phi^t_{H_{k}} = \phi^{kt}_H$ for all $t \in \R$. 
The action, like the mean index grows linearly with iteration, in the sense that for all $k \in \N$ and $P \in \Fix(\phi^1_H)$ we have
\begin{equation}
\label{ }
\Aa_{H_k}(P) = k \Aa_{H}(P) \in \mathbb{R}/  \lambda_0 \Z.
\end{equation}
In particular, 
\begin{equation}
\label{ }
k\Ss(H) \subset \Ss(H_k).
\end{equation}

\subsubsection{Hamiltonian Floer homology} Let $H$ be a Hamiltonian on $(M, \omega)$. The contractible $1$-periodic orbits of 
of $X_H$ are in one-to-one correspondence with the contractible fixed points  of $\phi^1_H$. Assuming these are all nondegenerate they generate the Floer complex of $H$ and  the corresponding Hamiltonian Floer homology of $H$, $\HF(H)$. This   is isomorphic to $\mathrm{HQ}(M)$, the quantum homology of $M$. More precisely, with respect to the standard gradings of  $\HF(H)$ by the Conley-Zehnder index,  we have $\HF_*(H) \cong \mathrm{HQ}_{*+n}(M)$.

We will also require two related notions. The first of these, introduced by Floer and Hofer in \cite{fh},  is $\HF^{(a,b)}(H)$, the Hamiltonian Floer homology of $H$ restricted to the action window $(a,b)$. 
The second required variant of Floer homology needed is that  of local Floer homology as introduced by Floer in \cite{fl}. In the present setting, the local Floer homology is a group $\HF^{\loc}_*(H, P, u)$ associated to an isolated contractible fixed point $P$ of $\phi^1_H$ and a spanning disc  $u \colon \mathbb{D}^2 \to M$ for the smooth loop $x(t)= \phi^t_H(P)$. The only detail of the construction of local Floer homology relevant here is that the first step is to perturb $H$, if necessary, so that $P$ breaks into a collection nearby fixed points which are nondegenerate.

\subsubsection{Floer homology of symplectic diffeomorphisms}\label{floerhomology}
Finally, we will  need to consider  the Floer homology of symplectic diffeomorphisms. In fact, we need only consider the very special case when $(M,\omega)$ is a two-dimensional symplectic torus $(\T^2, \Omega)$ and the symplectic diffeomorphism is isotopic to the identity. The relevant details of this construction are described briefly below and the reader is referred  to  \cite{ds, se1, se2, se3, va} for more details on the general construction,  and to \cite{c1,c2}  for more thorough reviews of the Floer theory of symplectic diffeomorphisms of surfaces.

Consider a smooth isotopy  $\psi_t$ of symplectic diffeomorphisms of $(\T^2, \Omega)$ starting at the identity such that the fixed points of $\psi_1$ are all nondegenerate.  The Floer homology of $\psi_1$, $\FH(\psi_1)$, is then a well-defined group with the following properties.

\medskip

\noindent{\bf Invariance under Hamiltonian isotopy:} If $\phi$ is a Hamiltonian diffeomorphism of  $(\T^2, \Omega)$
  then \begin{equation*}
\label{ }
\FH(\psi_1 \phi)= \FH(\psi_1).
\end{equation*}
  
  \medskip

\noindent{\bf Splitting:} The Floer homology $\FH(\psi_1)$ admits a decomposition of the form 
\begin{equation*}
\label{ }
 \FH(\psi_1) = \bigoplus_{c \in \mathrm{H}_1(\T^2;\Z)} \FH(\psi_1; c).
\end{equation*}
Here, each summand $\FH(\psi_1; c)$ is the homology of a chain complex
$(\CF(\psi_1; c), \partial_J)$ where the chain group $\CF(\psi_1; c)$ is a torsion-free module over a suitable Novikov ring, and the rank of this module is the number of  fixed points  of $\psi_1$ which represent the class $c$.  The group $\FH(\psi_1; 0)$ coincides with the Floer-Novikov Homology constructed by L{\^e} and Ono in \cite{va}, (\cite{se3}).  Moreover, if $\psi_t$ is homotopic to a Hamiltonian isotopy $\phi^t_H$, then $\FH(\psi_1) = \FH(\psi_1; 0)= \HF(H)$, \cite{fl}. In particular, $\FH(\psi_1)$ is canonically isomorphic to $\mathrm{H}(\T^2;\Z)$.
 \medskip

\noindent{\bf Grading:}  Each chain complex $(\CF(\psi_1; c), \partial_J)$ above has a relative $\Z$-grading and the boundary operator decreases degrees by one. For the case $c=0$, the grading 
can be set by using the usual Conley-Zehnder index  of contractible fixed points (which is well-defined since $c_1(\T^2) = 0$.) For example   if $\psi_t$ is homotopic to a Hamiltonian isotopy we have  
\begin{equation}
\label{hfham}
\FH_*(\psi_1; 0) = \mathrm{H}_{*+1}(\T^2;\Z).
\end{equation}
For a general class $c \in  \mathrm{H}_1(\T^2;\Z)$ the (relative) grading of $(\CF_*(\psi_1; c), \partial_J)$ is again determined by the Conley-Zehnder index and the overall shift can be fixed by choosing a homotopy class of symplectic trivializations of $z^*(T\T^2)$ where $z \colon S^1 \to \T^2$ is a smooth representative of $c$.

\medskip

\noindent{\bf Extension to all smooth isotopies:}
The property of invariance under Hamiltonian isotopy allows one to also define the Floer homology for any  smooth symplectic isotopy $\widetilde{\psi}_t$ of $(\T^2, \Omega).$ 
One simply  sets 
\begin{equation*}
\label{ }
\FH_*(\widetilde{\psi}_1) = \FH_*(\widetilde{\psi}_1 \circ \phi) 
\end{equation*}
where $\phi$ is a Hamiltonian diffeomorphism for which the fixed points of  $\widetilde{\psi}_1 \circ \phi$ are nondegenerate. For example, if $\widetilde{\psi}_t = id$ for all $t \in [0,1]$ we can perturb by the Hamiltonian flow of a $C^2$-small Morse function to
obtain
\begin{equation}
\label{hfidentity}
\FH(id) = \FH(id;0) = \mathrm{H}(\T^2;\Z).
\end{equation}

\medskip

\noindent{\bf Dichotomy:} The following alternative is well known and plays a crucial role in the proof of Theorem \ref{fhnew}. A proof of this alternative can be found in \cite{res1}, for example.

\begin{Proposition}\label{hom}
Either $\psi_1$ is a Hamiltonian diffeomorphism in which case $\FH(\psi_1) = \FH(\psi_1; 0)$ and
 $\FH_*(\psi_1; 0)= \mathrm{H}_{*+1}(\T^2;\Z)$, or 
$\FH(\psi_1)$ is trivial.
\end{Proposition}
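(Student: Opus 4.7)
The plan is to exploit the invariance of $\FH$ under Hamiltonian isotopy together with the well-known fact that the group $\mathrm{Symp}_0(\T^2,\Omega)/\Ham(\T^2,\Omega)$ is isomorphic to $\T^2$, with the subgroup of translations providing a section.  The proof then splits along the stated dichotomy.

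Suppose first that $\psi_1 \in \Ham(\T^2,\Omega)$, and write $\psi_1 = \phi^1_H$ for some Hamiltonian $H$.  Using the invariance of $\FH$ under composition with a Hamiltonian isotopy, we may replace the given isotopy $\psi_t$ by one that is homotopic (through symplectic isotopies, and rel endpoints) to $\phi^t_H$.  The identity $\FH(\psi_1) = \FH(\psi_1;0) = \HF(H)$ stated in the excerpt then applies.  Taking $H$ to be a $C^2$-small autonomous Morse function, the $1$-periodic orbits of $\phi^t_H$ are the critical points of $H$ and are all contractible, so only the $c=0$ summand contributes.  Since $\T^2$ is symplectically aspherical with $c_1 = 0$, the quantum homology $\HQ(\T^2)$ reduces to the singular homology $\mathrm{H}(\T^2;\Z)$, and the standard Floer--Morse identification yields $\FH_*(\psi_1;0) = \HF_*(H) = \mathrm{H}_{*+1}(\T^2;\Z)$.

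Suppose next that $\psi_1 \notin \Ham(\T^2,\Omega)$, so that its flux class in $\mathrm{H}^1(\T^2;\R)/\Gamma \cong \T^2$ is nonzero.  Choose $v \in \R^2 \setminus \Z^2$ representing this flux class.  Then $T_v^{-1} \circ \psi_1$ lies in $\Ham(\T^2,\Omega)$; writing $T_v^{-1} \circ \psi_1 = \phi$, we have $\psi_1 = T_v \circ \phi$.  Applying the stated invariance with the Hamiltonian diffeomorphism $\phi^{-1}$ gives
\[
\FH(\psi_1) \;=\; \FH(\psi_1 \circ \phi^{-1}) \;=\; \FH(T_v).
\]
Because $v \notin \Z^2$, the translation $T_v$ has no fixed points on $\T^2$, so $\CF(T_v;c) = 0$ for every class $c \in \mathrm{H}_1(\T^2;\Z)$, and therefore $\FH(T_v) = 0$.

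The main obstacle lies in the careful handling of the dependence of $\FH(\psi_1)$ on the chosen isotopy $\psi_t$.  In Case~1, a Hamiltonian $\psi_1$ does not come with a canonical Hamiltonian isotopy from the identity: the given $\psi_t$ may differ from any such isotopy by a loop in $\pi_1(\mathrm{Symp}_0(\T^2,\Omega))$ generated by translation loops, and one must verify that this ambiguity does not affect the final identification of $\FH(\psi_1)$ with $\HF(H)$.  In Case~2, the reduction must be arranged so that $\psi_1 \circ \phi^{-1}$ is literally a translation (not merely isotopic to one), which relies on the fact that the flux homomorphism admits the translation subgroup as a splitting.  These are the points where the full Novikov--Floer machinery from the references cited in the excerpt is required.
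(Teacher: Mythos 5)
The paper offers no proof of this proposition itself; it simply defers to \cite{res1}. Your argument --- dichotomizing on the flux class of $\psi_t$ in $\mathrm{H}^1(\T^2;\R)/\Z^2 \cong \T^2$, using the translation subgroup as a section of the flux homomorphism together with invariance under Hamiltonian composition to reduce the non-Hamiltonian case to the fixed-point-free translation $T_v$, and computing the Hamiltonian case by a $C^2$-small autonomous Morse model --- is correct and is, as far as I can tell, the same route taken in \cite{res1}. The ``main obstacle'' you flag at the end is a legitimate point to raise but is not an actual gap: the decomposition by $c\in\mathrm{H}_1(\T^2;\Z)$ is indeed defined via the isotopy (a fixed point $P$ is assigned the class $[\psi_t(P)]$), but composing with a Hamiltonian isotopy preserves the flux and hence this decomposition, which is precisely what makes the reduction to $T_v$ legitimate; and in the Hamiltonian case the grading of $\FH_*(\psi_1;0)$ is canonical because $c_1(T\T^2)=0$ makes the Conley--Zehnder index of a contractible orbit independent of the choice of spanning disc, so different Hamiltonian isotopies to $\psi_1$ (which differ by Hamiltonian loops) yield the same graded answer.
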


\section{Vinogradov's Hanukkah gift: Symplectically degenerate maxima and periodic orbits of prime period}\label{118+}

Let $ \varphi= \phi^1_H$ be a Hamiltonain diffeomorphism of  $(M, \omega)$ and let $P$ be a fixed point of $\varphi$ such that the loop $x(t) =\phi^t_H(P)$ is contractible. The point $P$ is called a 
{\it symplectically degenerate maximum} of $\varphi$ if for some spanning disc $u \colon \mathbb{D} \to M$ of $x$ we have 
\begin{equation}
\label{max}
\Delta(P; \phi^t_H, [u] ) = 0  \text{   and   } \HF^{\loc}_n(H, P,u) \neq 0,
\end{equation}
where $[u]$ denotes the equivalence class of symplectic trivializations of $TM$ along $x$ determined by $u$.
The following result encapsulates the breakthrough at the heart of the proof of the 
Conley conjecture by Hingston in \cite{Hi} and Ginzburg in \cite{Gi:conley}. It is established in this more general setting  by Ginzburg and G\"urel in \cite{gg2}.

\begin{Theorem} (Theorem 1.17 of \cite{gg2})
\label{117}
Suppose $(M, \omega)$ is weakly-monotone and rational  and that $P$ is a symplectically degenerate maximum of $\varphi = \phi^1_H$.  Let $u$ be a spanning disc as in \eqref{max} and set $c= \Aa_H(P,u)$. For every sufficiently small $\epsilon>0$ there is a $k_{\epsilon}>0$ such that 
\begin{equation}
\label{ }
\HF_{n+1}^{(kc+\delta_k, kc + \epsilon)}(H_k) \neq 0
\end{equation}
for all $k>k_{\epsilon}$ and some $\delta_k$ with $0<\delta_k < \epsilon.$
\end{Theorem}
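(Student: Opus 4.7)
The plan is to localize the problem to an isolating neighborhood $U$ of $P$ and to exploit the defining feature of a symplectically degenerate maximum: after a small compactly supported Hamiltonian perturbation inside $U$, $P$ can be modeled as a strict local maximum of an autonomous Hamiltonian whose action differs from $\Aa_H(P,u)$ by less than any prescribed amount. The iteration formula \eqref{iter} gives $\Delta(P;\phi^t_{H_k},[u^k])=k\cdot 0=0$ for every $k$, and a standard persistence argument for local Floer homology under iterated cappings preserves $\HF^{\loc}_n(H_k,P,u^k)\neq 0$, so the full SDM condition \eqref{max} is inherited by every iterate $H_k$ of $H$.

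The heart of the argument is a Hingston-style descent, applied to the iterated Hamiltonian. Replace $H$ inside $U$ by an autonomous $K$ having a strict local maximum at $P$ and sharing the same filtered Floer data as $H$ to small error. Iterate to produce $K_k$, then apply a $C^2$-small bump perturbation $\widetilde{K}_k$ supported in $U$ which splits the degenerate orbit at $P$ into finitely many nondegenerate contractible $1$-periodic orbits of $\phi^1_{\widetilde{K}_k}$, all with actions close to $kc$. The SDM structure guarantees that the unperturbed orbit behaves, at the level of the linearized Floer equation, like a local max of a function; Hingston's analysis then produces among the offspring at least one orbit $P_k$ with Conley-Zehnder index $n+1$ and with action $kc+\delta_k$ for some $\delta_k\in(0,\epsilon)$. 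Shrinking $\epsilon$ if needed, one arranges that no other orbit of $\widetilde{K}_k$ lies in the window $(kc+\delta_k,kc+\epsilon)$.

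To convert the existence of $P_k$ into nontriviality of $\HF^{(kc+\delta_k,kc+\epsilon)}_{n+1}(\widetilde{K}_k)$, I would invoke the fact that $\HF^{\loc}_n(H_k,P,u^k)\neq 0$ forces the generator coming from the offspring of $P$ at action level $kc$ to survive in the local Floer complex. A continuation-of-filtration argument, combined with the isolation established above, shows that the companion class sitting one degree up must be supported precisely in the window $(kc+\delta_k,kc+\epsilon)$. Filtered Floer homology is invariant under $C^2$-small perturbations that do not alter the action spectrum in the window, so the nontriviality transfers from $\widetilde{K}_k$ back to $H_k$, yielding the conclusion.

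The main obstacle I expect is arranging the Hingston descent \emph{uniformly} in $k$: the perturbation, the value of $\delta_k$, and the isolation of the offspring orbits in the window must be controlled by quantities depending only on $\epsilon$ and not on $k$. This is managed by exploiting that the mean index of $P$ along every iterate is zero, so the linearized flow transverse to the orbit has purely unipotent dynamics; the relevant local model stabilizes after finitely many iterations, producing a threshold $k_\epsilon$ past which a single universal model perturbation in $U$ simultaneously produces the desired offspring orbit for every $k>k_\epsilon$.
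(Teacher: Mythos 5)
This statement is not proved in the paper at all: it is quoted verbatim as Theorem 1.17 of Ginzburg--G\"urel \cite{gg2} and used as a black box (along with part (i) of Theorem 1.18 of the same reference). There is therefore no ``paper's own proof'' against which to compare your proposal; the relevant argument lives in \cite{gg2} (building on \cite{Hi,Gi:conley}), not here.

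On the substance of your sketch: you have the right ingredients in the air (autonomous local model at $P$, persistence of the SDM condition under iteration, translation into filtered Floer homology), but there is a genuine conceptual misidentification at the center. You claim the nontrivial class in $\HF_{n+1}^{(kc+\delta_k,\,kc+\epsilon)}(H_k)$ is carried by an offspring orbit $P_k$ produced by a $C^2$-small bump perturbation of $P$. Such offspring have actions arbitrarily close to $kc$ as the perturbation shrinks, whereas the window begins at $kc+\delta_k$ with $\delta_k>0$; the window is \emph{designed} to exclude the action level $kc$ of $P$ and its perturbed offspring. The whole force of the theorem, and the reason it yields infinitely many periodic orbits in Theorem~\ref{118}, is precisely that the SDM at $P$ forces \emph{other} one-periodic orbits of $H_k$, not arising from perturbing $P$, to have action slightly above $kc$. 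The actual mechanism in \cite{gg2} is a squeezing/continuation argument: one constructs two model Hamiltonians near $P$ sandwiching $H_k$, computes the relevant filtered continuation map explicitly on those models, and observes that this map factors through $\HF_{n+1}^{(\cdot)}(H_k)$; nontriviality of the composed map forces the middle group to be nonzero without ever naming the carrying orbit. Two further points in your sketch cannot stand as written: ``shrinking $\epsilon$ so that no other orbit lies in the window'' is not a free choice (which orbits lie in the window is precisely what is being determined), and the uniformity in $k$ that you dispatch in the final paragraph is where most of the technical weight in \cite{gg2} actually sits --- it is not a routine consequence of unipotence of the linearized return map.
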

 
Also proved in  \cite{gg2} is the following result concerning the periodic points of a Hamiltonian diffeomorphism with 
a symplectically degenerate maximum.

\begin{Theorem} (part (i) of Theorem 1.18 of \cite{gg2})
\label{118}
Suppose $(M, \omega)$ is weakly-monotone and rational. If $\varphi$ has finitely many contractible fixed points and one of these is a symplectically degenerate maximum, then $\varphi$ has infinitely many geometricially distinct  contractible periodic points.
\end{Theorem}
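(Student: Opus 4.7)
The plan is to argue by contradiction, combining the action-filtered statement of Theorem \ref{117} with Vinogradov's equidistribution theorem \ref{prime}. Suppose, to the contrary, that there is a $K\in\N$ such that no contractible periodic point of $\varphi$ has minimal period equal to a prime $\geq K$. Enumerate the contractible fixed points of $\varphi$ as $P_1=P,P_2,\ldots,P_m$, fix spanning discs $u_j$, and set $a_j=\Aa_H(P_j,u_j)$ and $c=a_1$.

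I would first describe the contractible generators of the Floer complex of $H_p$ for primes $p>K$. Any contractible $1$-periodic orbit of $H_p$ comes from a contractible periodic point of $\varphi$ whose minimal period divides $p$; since $p$ is prime, this minimal period is either $1$ or $p$, and $p$ is ruled out by the assumption made for contradiction. Hence every such orbit is the $p$-fold iterate of some $P_j$, and the set of possible actions over all choices of spanning disc equals $\bigcup_{j=1}^m(pa_j+\lambda_0\Z)$. Applying Theorem \ref{117} to $P$ with a small $\epsilon>0$, for every prime $p>\max(K,k_\epsilon)$ the nonvanishing of $\HF_{n+1}^{(pc+\delta_p,\,pc+\epsilon)}(H_p)$ forces
\[
(p\vartheta_j)\in(0,\epsilon/\lambda_0)\quad\text{for some }j,\qquad \vartheta_j:=(a_j-c)/\lambda_0.
\]

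The remainder is to exhibit infinitely many primes $p$ for which none of the $\vartheta_j$ satisfies this condition. I would split by arithmetic type. If $\vartheta_j\in\Z$ (in particular $\vartheta_1=0$), then $(p\vartheta_j)=0\notin(0,\epsilon/\lambda_0)$. If $\vartheta_j=q_j/r_j$ is a non-integer rational in lowest terms, then for every prime $p\nmid r_j$ one has $(p\vartheta_j)\geq 1/r_j$, so shrinking $\epsilon$ so that $\epsilon/\lambda_0<\min_j 1/r_j$ excludes these $\vartheta_j$ for all but finitely many primes. If $\vartheta_j$ is irrational, Theorem \ref{prime} asserts that the set of primes with $(p\vartheta_j)\in(0,\epsilon/\lambda_0)$ has natural density $\epsilon/\lambda_0$. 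A union bound over the at most $m-1$ irrational $\vartheta_j$, combined with the further choice $(m-1)\epsilon/\lambda_0<1$, produces a set of primes of positive natural density on which $(p\vartheta_j)\notin(0,\epsilon/\lambda_0)$ for every $j$; this set is infinite, giving the desired contradiction.

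Since distinct primes produce distinct periodic orbits, the argument yields infinitely many contractible periodic points of arbitrarily large prime period. The main obstacle—and the place where primality of the iteration is essential—is the action-spectrum identification in the second paragraph: only by restricting to prime iterations can one use the assumption made for contradiction to eliminate every source of contractible fixed points of $\varphi^p$ apart from the $p$-fold iterates of the contractible fixed points of $\varphi$; torsion phenomena in $\pi_1(M)$ are handled exactly as in the proof of Theorem \ref{118} in \cite{gg2}. Once this reduction is in place, Vinogradov's theorem supplies the arithmetic density needed to defeat Theorem \ref{117} on an infinite subset of primes.
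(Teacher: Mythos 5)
Your argument is correct and follows essentially the same route as the paper's own proof; in fact, you are proving the sharpened Theorem \ref{118plus} rather than just \ref{118}, which is exactly what the paper does. Both proofs argue by contradiction, use primality to ensure $\Fix(\varphi^{p}) = \Fix(\varphi)$ for all large primes $p$, feed the prime-iterated action spectrum into the filtered nonvanishing of Theorem \ref{117}, and invoke Vinogradov's theorem to defeat that nonvanishing on a set of primes of positive relative density. The only cosmetic differences are that the paper normalizes $\Aa_H(P,u)=0$ and splits the action spectrum directly into rational and irrational parts with a single $\epsilon < 1/d$ condition, whereas you work with the shifted quantities $\vartheta_j=(a_j-c)/\lambda_0$ and treat the integer, non-integer-rational, and irrational cases separately with $\epsilon/\lambda_0 < \min_j 1/r_j$; these are equivalent bookkeeping choices.
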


In this section we prove Theorem \ref{118plus} which improves Theorem \ref{118} by detecting, under the same hypotheses,  infinitely many contractible  periodic points of arbitrarily large prime period.

\subsection{Proof of Theorem \ref{118plus}}
In fact, we only need to alter the proof from \cite{gg2} by considering prime iterates and replacing the role of the standard equidistribution theorem with Vinogradov's prime distribution theorem. Because it is short, we include the entire argument  for completeness, following 
very closely the presentation from \cite{gg2}.

Arguing by contradiction we assume that  $\varphi=\phi^1_H$ does not have contractible  periodic points of arbitrarily large prime period and  derive a contradiction.  Let $P$ be a symplectically degenerate maximum of $\phi^1_H$ and $u$ a spanning disc of $\phi^t_H(P)$ such that \eqref{max}  holds. For convenience, we will  normalize (add the appropriate constant to) our generating Hamiltonian $H$ so that $\Aa_H(P,u) =0$.

If $\phi^1_H$ does not have periodic points of  prime period set $j_0=0$. Otherwise, let $j_0>0$ be the smallest natural number such that $p_{j_0}$ is greater than the
largest prime period of $\phi^1_H$. We then have
\begin{equation}
\label{fixn}
\Fix(\phi_H^{p_j}) = \Fix(\phi^1_H)
\end{equation} 
for all $j\geq j_0$.

Now, the action spectrum of $H$, $\Ss(H)$ can be divided into action values which are rational and irrational modulo the index of rationality, $\lambda_0$, i.e., 
\begin{equation}
\label{ }
\Ss(H) = \lambda_0 \{\eta_1, \dots, \eta_r \} \cup \lambda_0 \{m_1/n_1, \dots , m_s/n_s \}
\end{equation}
where the $\eta_i$ are irrational and the $m_i$ and $n_i$ are integers.
From \eqref{fixn} we also have  
\begin{equation}
\label{actionn}
\Ss(H_{p_j})  =p_j \Ss_{\lambda_0}(H) 
\end{equation}
for all $j\geq j_0$ where, again, the flow of the Hamiltonian $H_{p_j}$ is $\phi^{tp_j}_H$.

Let $d$ be the least common multiple of the $n_i$. Choose the $\epsilon$ in Theorem \ref{117} to be less than $1/d$ and let $k_{\epsilon}$ be the corresponding integer promised in that theorem.
Choose $j_1\geq j_0$ such that $p_j> k_{\epsilon}$ for all $j  \geq j_1$. Theorem \ref{117} implies that for all primes $p_j >k_\epsilon$ one of the elements of  $\Ss(H_{p_j})$
lies in the interval $(0, \epsilon) \subset \R/\lambda_0\Z $. For all $j \geq j_1$ it follows from \eqref{actionn} and our choice of $\epsilon$ that this element  of  $\Ss(H_{p_j})$ in $(0, \epsilon)$ must be of the form $p_j\eta_i$ for some $i=1, \dots, r$.  In conclusion, under our assumption on $\varphi$, for all sufficiently small $\epsilon>0$ there is a $j_1 \geq j_0$ such that for every $j\geq j_1$ there is an integer $i \in [1,r]$ such that  $p_j\eta_i \in (0, \epsilon)$.
We now derive the desired contradiction by showing that this statement is false.

For $\ell>j_1$, set 
\begin{equation}
\label{ }
\Pp_i(\ell, \epsilon) = \frac{|\{p_{j_1+1}\eta_i , \dots, p_\ell \eta_i\} \cap (0, \epsilon)|}{\ell-j_1},
\end{equation}
In other words,  
$\Pp_i(\ell, \epsilon)$ is the probability that  $p_j\eta_i$ lies in $(0, \epsilon)$ for $j \in (j_1, \ell]$. 
By Theorem \ref{prime} we have 
$$
\lim_{\ell \to \infty} \Pp_i(\ell, \epsilon) = \frac{\epsilon }{\lambda_0}.
$$
Hence
$$
\lim_{\ell \to \infty} \sum_1^r \Pp_i(\ell, \epsilon) = \frac{r \epsilon }{\lambda_0}
$$
and for or $\epsilon < \lambda_0/r$ and $\ell$ sufficiently large, we have  $\sum_1^r \Pp_i(\ell, \epsilon)<1$. It follows that for such an $\epsilon$ and $\ell$  there must be a $j \in (j_1, \ell]$ such that $p_j\eta_i \notin (0, \epsilon)$ for all $i =1, \dots, r$.  This contradicts the statement above and concludes the proof.

\section{Resonance relations for fixed points of Hamiltonian diffeomorphisms}\label{res+}

Let  $\varphi$ be a Hamiltonian diffeomorphism of $(M, \omega)$ which has finitely many contractible fixed points. 
Let $\Delta_1,\ldots, \Delta_m$ be the collection of  nonzero mean indices 
of the contractible fixed points of $\varphi$.  A  \emph{resonance relation}
for $\Fix(\varphi)$ is a vector $\va=(a_1,\ldots,a_m)\in\Z^m$ such that
$$
a_1\Delta_1+\ldots+a_m\Delta_m=0\mod 2N.
$$
The resonance relations of $\Fix(\varphi)$ form a free abelian group $\CR(\varphi)\subset \Z^m$.  

In this section we use Theorem \ref{118plus} and  Theorem \ref{kprime} to prove Theorem \ref{resplus}, which we 
restate here for convenience. 

\begin{Theorem}
Suppose that $(M, \omega)$ is weakly-monotone and rational and the minimial Chern number $N$ is finite and greater  than $n$. 
Let $\varphi$ be a Hamiltonian diffeomorphism which has finitely many contractible fixed points
and does not have contractible fixed points of arbitrarily large period in $\mathbb{P}^2$.
Then $\CR(\varphi) \neq 0$, i.e., the nonzero mean indices $\Delta_i$ satisfy at least
 one non-trivial resonance relation.
\end{Theorem}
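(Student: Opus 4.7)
We argue by contradiction. Suppose that the nonzero mean indices $\Delta_1, \ldots, \Delta_m$ of the contractible fixed points of $\varphi$ satisfy no nontrivial resonance, so that $1, \Delta_1/(2N), \ldots, \Delta_m/(2N)$ are linearly independent over $\Q$. For any sufficiently large prime $p$, the hypothesis on $\varphi$ precludes contractible periodic points of period exactly $p$, so $\Fix(\varphi^p) = \Fix(\varphi)$ and the Floer chain complex $\CF(H_p)$ is generated by the same finite set of contractible fixed points. Since $\HF_*(H_p) \cong \HQ_{*+n}(M)$, in particular $\HF_{-n}(H_p) \cong \HQ_0(M) \neq 0$. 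The strategy is to single out one large prime $p$ for which no generator of $\CF(H_p)$ can carry the degree $-n \pmod{2N}$, yielding the desired contradiction.

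First I would eliminate symplectically degenerate extrema for the iterates. For a large prime $p$, every contractible periodic point of $\varphi^p$ of prime period $q$ is a contractible periodic point of $\varphi$ of period $pq \in \mathbb{P}^2$; by hypothesis only finitely many such points exist, so $\varphi^p$ has no contractible periodic points of arbitrarily large prime period. The contrapositive of Theorem \ref{118plus} thus asserts that $\varphi^p$ has no symplectically degenerate maximum. Applying the same argument to the inverse $\varphi^{-p}$, and invoking the Poincar\'e-duality identification $\HF^{\loc}_{-n}(H_p, P, u^p) \cong \HF^{\loc}_n(H'_p, P, (u')^p)$ where $H'$ generates $\varphi^{-1}$, we also rule out any ``symplectically degenerate minimum'' of $\varphi^p$, i.e.\ a contractible fixed point $P$ with $\Delta(P) = 0$ and $\HF^{\loc}_{-n}(H_p, P, u^p) \neq 0$.

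Next I would apply Harman's Theorem \ref{kprime}. Rational independence guarantees that the sequence $\big( (p_j \Delta_1/(2N)), \ldots, (p_j \Delta_m/(2N)) \big)$ is dense in $(0,1)^m$. Choose as target $a = N - n \in \R/2N\Z$; since $N > n$, the distance on the circle $\R/2N\Z$ from $a$ to $-n$ is $N$. Fix $\epsilon > 0$ with $n + \epsilon < N$ and pick a large prime $p$ such that $p \Delta_i \in (a - \epsilon, a + \epsilon) \pmod{2N}$ for every $i = 1, \ldots, m$.

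It then remains to show that $\HF_{-n}(H_p) = 0$. For a contractible fixed point $P$ of $\varphi^p$ with $\Delta(P) = \Delta_i \neq 0$, the iteration formula gives $\Delta(P; \phi^t_{H_p}) = p\Delta_i$, which lies within $\epsilon$ of $a$ modulo $2N$; after a $C^\infty$-small nondegenerate perturbation of $H_p$ (which does not affect the relevant filtered Floer theory), the Approximation Property \eqref{closer} shows that the Conley--Zehnder index of each split generator lies within $n + \epsilon < N$ of $a$ on $\R/2N\Z$, and hence cannot equal $-n$ modulo $2N$. For a fixed point $P$ with $\Delta(P) = 0$, the only contribution to $\HF_{-n}(H_p)$ passes through $\HF^{\loc}_{-n}(H_p, P, u^p)$, whose nonvanishing would exhibit $P$ as a symplectically degenerate minimum of $\varphi^p$, contradicting the previous paragraph. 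Thus $\HF_{-n}(H_p) = 0$, contradicting $\HQ_0(M) \neq 0$. The main obstacle I anticipate is the symplectically degenerate minimum step: making the time-reversal duality of local Floer homology precise enough to transfer the SDM-exclusion from $\varphi^{-p}$ to the desired vanishing of $\HF^{\loc}_{-n}$ for $\varphi^p$, and verifying that the perturbative bookkeeping at degenerate fixed points preserves these local invariants in the critical degrees $\pm n$.
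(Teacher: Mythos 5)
Your argument follows the paper's core strategy — fix a large prime $p_j$ with $\Fix(\varphi^{p_j})=\Fix(\varphi)$, note that a degree distinguished by $N>n$ forces nonvanishing of some local Floer homology, translate this via the Approximation Property into a constraint on $p_j\Delta_i$ modulo $2N$, rule out the zero--mean-index fixed points using Theorem \ref{118plus}, and then invoke Harman's Theorem \ref{kprime} to conclude that $1,\Delta_1,\ldots,\Delta_m$ must be rationally dependent. The only substantive difference is that you work in degree $-n$ (the point class) where the paper works in degree $n$ (the fundamental class), and this choice costs you something. With $\HF_n$, nonvanishing of $\HF^{\loc}_n(H_{p_j},X,u)$ gives $\Delta(X;\phi^t_{H_{p_j}},[u])\in[0,2n]$; if $X=Q_i$ then $p_j\Delta(Q_i)=0\bmod 2N$, and since $2N>2n$ this forces $\Delta=0$, making $Q_i$ a symplectically degenerate \emph{maximum} — the exact hypothesis of Theorem \ref{118plus}. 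No duality is needed, and the target region is simply the open cube $(2n,2N)^m\subset(\R/2N\Z)^m$. With $\HF_{-n}$, the analogous fixed point is a would-be symplectically degenerate \emph{minimum}, which Theorem \ref{118plus} does not address directly; this is precisely why you need the time-reversal/Poincar\'e duality $\HF^{\loc}_{-n}(H_p,P,u)\cong\HF^{\loc}_n(H'_p,P,u')$, which you rightly flag as the weak link. That duality is plausible (and essentially folklore, at least over a field), but it is an extra lemma that the paper sidesteps entirely. Finally, the paper phrases the Harman step in the contrapositive: Proposition \ref{notdense} shows that $p_j\Delta(\varphi)$ \emph{avoids} the cube $\Pi=(2n,2N)^m$ for all $j\geq j_0$, so the sequence is not dense and Theorem \ref{kprime} forces rational dependence; this is logically the same as your ``pick one prime landing near the target'' formulation but requires no choice of $\epsilon$ or target point. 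I would recommend switching to $\HF_n$ and $[0,2n]$, which eliminates the SDMin issue and turns your proposal into the paper's proof.
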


\noindent Again $\mathbb{P}^2$ is the set of natural numbers greater than one which have at most  two prime factors when counted with multiplicity.
\subsection{Proof of Theorem \ref{resplus}}
Divide the set of contractible fixed points of $\varphi$ into two subsets;  the points $\{P_1, \dots, P_m\}$  with nonzero mean indices (modulo $2N$), and the points  $\{Q_1, \cdots , Q_l\}$ with zero mean indices (modulo $2N$).  Set $\Delta_j = \Delta(P_j)$ and consider the $m$-tuple of nonzero mean indices  $$\Delta(\varphi) = (\Delta_1,
\ldots, \Delta_m) \in (\R/2N\Z)^{m}.$$ Define $\Pi$ to be the open cube $(2n,2N)^{m}$ in
the torus  $(\R/2N\Z)^{m}$.  

\begin{Proposition}\label{notdense}
There is a $j_0\geq 0$ such that for any $j  \geq j_0$ the 
point $p_j \Delta(\varphi) =  (p_j \Delta_1, \ldots, p_j \Delta_m) \in (\R/2N\Z)^{m}$ is not contained
in $\Pi$.
\end{Proposition}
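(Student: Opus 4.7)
The plan is to argue by contradiction. Suppose that $p_j\Delta(\varphi)\in\Pi$ for infinitely many~$j$; I will produce contractible periodic points of arbitrarily large period in $\mathbb{P}^2$, contradicting the hypothesis of Theorem~\ref{resplus} (primes themselves belong to $\mathbb{P}^2$). As a preliminary reduction I would first rule out symplectically degenerate maxima (SDMs) among the fixed points of $\varphi$: if any $P_i$ or $Q_k$ were an SDM, Theorem~\ref{118plus} would immediately yield infinitely many contractible periodic points of arbitrarily large prime period, again contradicting the hypothesis. The same hypothesis forces $\Fix(\varphi^{p_j})=\Fix(\varphi)=\{P_1,\dots,P_m\}\cup\{Q_1,\dots,Q_l\}$ for all sufficiently large primes $p_j$, since otherwise $\varphi$ would have a contractible periodic point of prime period $p_j$.

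For a large prime $p_j$ with $p_j\Delta(\varphi)\in\Pi$ I would then compute $\HF_n(\varphi^{p_j})$ in two ways. On the one hand, $\HF_n(\varphi^{p_j})\cong\mathrm{HQ}_{2n}(M)\neq 0$. On the other hand, this Floer homology is built from the local Floer homologies at the isolated fixed points $\{P_i\}\cup\{Q_k\}$ of $\varphi^{p_j}$, and my aim is to show that none of these contributes in Floer degree $n\bmod 2N$, which would give the desired contradiction. For each $P_i$ its mean index as a fixed point of $\varphi^{p_j}$ is $p_j\Delta_i$, lying strictly in $(2n,2N)\bmod 2N$; by the approximation property~\eqref{closer}, $\HF^{\loc}_*(H_{p_j},P_i)$ is supported in the arc of radius $n$ around $p_j\Delta_i$ in $\R/2N\Z$, and the openness of $\Pi$ together with $N>n$ ensures that this arc misses the residue class~$n$. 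For each $Q_k$ the mean index is $0\bmod 2N$, so $\HF^{\loc}_*(H_{p_j},Q_k)$ is supported in $[-n,n]\bmod 2N$; any nonzero contribution in degree $n$ would make $Q_k$ an SDM of $\varphi^{p_j}$, and by the iteration invariance of SDMs established in~\cite{gg2} this would make $Q_k$ an SDM of $\varphi$ itself, contradicting the preliminary reduction.

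The main obstacle I anticipate is the Floer-theoretic bookkeeping in the second paragraph: one must apply the approximation property to local Floer homology with compatible real-valued lifts of the mean indices under iteration (via the natural iterated spanning discs $u^{p_j}$) and then invoke the iteration invariance of SDMs with matching spanning-disc conventions between $\varphi$ and $\varphi^{p_j}$. The openness of $\Pi$ precludes the borderline cases $p_j\Delta_i\in\{2n,2N\}\bmod 2N$ where the support arcs could touch the residue $n$, and the hypothesis $N>n$ keeps the residue class $n$ cleanly separated within $\R/2N\Z$, so that only the SDM phenomenon at one of the $Q_k$'s could conceivably resurrect a nonzero contribution in degree $n$; this is precisely the possibility that Theorem~\ref{118plus} has already eliminated.
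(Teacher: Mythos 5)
Your proposal follows the same core strategy as the paper's proof: use $\HF_n(H_{p_j})\neq 0$, decompose via local Floer homologies at the fixed points of $\varphi^{p_j}=\varphi$ (for $p_j$ large), and compare the mean-index windows with the residue class $n\bmod 2N$. The handling of the $P_i$'s is correct and matches the paper (you state it as a contrapositive, the paper states it directly; the content is the same).

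The genuine deviation is in how you dispose of the $Q_k$'s, and it is there that your argument has a gap. You conclude that if some $Q_k$ contributes to degree $n$, then $Q_k$ is a symplectically degenerate maximum of $\varphi^{p_j}$, and you then invoke an ``iteration invariance of SDMs'' to conclude that $Q_k$ is already an SDM of $\varphi$ itself, contradicting your preliminary reduction. The persistence of local Floer homology under iteration (and hence of the SDM property in the direction you need, from $\varphi^{p_j}$ back to $\varphi$) holds only for \emph{admissible} iterations, i.e.\ those $p_j$ for which no new eigenvalue $1$ appears in $d\varphi^{p_j}|_{Q_k}$; it is not a blanket statement valid for every prime. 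While one could patch this by observing that, for each $Q_k$, only finitely many primes are inadmissible, you neither state the admissibility hypothesis nor perform this reduction, and the result is not cleanly quotable from \cite{gg2} in the form you assert. The paper's proof avoids this issue entirely: having established that $Q_i$ is an SDM of $\varphi^{p_j}$, it applies Theorem \ref{118plus} \emph{to $\varphi^{p_j}$} (not to $\varphi$), obtaining contractible periodic points of $\varphi^{p_j}$ of arbitrarily large prime period $q$; these are periodic points of $\varphi$ of period $q$ or $p_jq$, which lie in $\mathbb{P}^2$ and tend to infinity, contradicting the hypothesis directly. This is precisely why the hypothesis of Theorem \ref{resplus} is phrased in terms of $\mathbb{P}^2$ rather than primes alone --- it is the mechanism that sidesteps the admissibility question your route runs into.
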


\begin{proof} If $\varphi$ does not have periodic points with period in $\mathbb{P}^2$ set $j_0=0$. Otherwise, let $j_0>0$ be the smallest natural number such that $p_{j_0}$ is greater than the
largest  period of $\varphi$ in $\mathbb{P}^2$.
We then have 
\begin{equation}
\label{identify}
\Fix(\varphi^{p_j}) = \Fix (\varphi)  \text{  for all  } j \geq j_0.
\end{equation}

Choose a Hamiltonian $H$ with $\varphi = \phi^1_H$. The fixed point set and the mean indices (modulo $2N$) are independent of this choice.  For any Hamiltonian $G$ on $(M, \omega)$, the Hamiltonian Floer homology, $\HF_*(G)$, is graded  modulo $2N$ by the Conley-Zehnder index and is isomorphic to $\mathrm{HQ}_{n+*}(M)$. Since $N>n$, the nontrivial {\it fundamental class} $\HF_n(G)$ is distinguished by its degree.  In particular, for all  $p_j$ we  have 
 \begin{equation}
\label{neq}
\HF_n(H_{p_j}) \neq 0. 
\end{equation}
For this to hold  there must be a contractible fixed point $X$ of $\phi^1_{H_{p_j}}$ and a spanning disc $u$ of $\phi^t_{H_{p_j}}(X)$ such that the local Floer homology $ \HF^{\loc}_n(H_{p_j}, X,u)$ is nontrivial. For such an $X$ it follows from the construction of local Floer homology, the continuity properties of the mean index and   \eqref{closer} that 
\begin{equation}
\label{ber}
\Delta(X; \phi^t_{H_{p_j}}, [u]) \in [0, 2n].
\end{equation}
By \eqref{identify}, $X$ is a fixed point of $\varphi =\phi^1_H$ and so \eqref{iter} implies that 
\begin{equation}
\label{mult}
\Delta(X; \phi^t_{H_{p_j}}, [u])=p_j\Delta(X; \phi^t_H, [v]) \mod 2N
\end{equation}
for any choice of spanning disc $v$.
We will therefore be done if we can prove that $X=P_i$ for some $i \in [1,m]$. For if $X=P_i$ then it follows from \eqref{mult} that  the $i$th component of $p_j \Delta(\varphi)$  is equal to $$\Delta(P_i; \phi^t_{H_{p_j}}, [u])\mod 2N$$ and \eqref{ber} then implies that this component lies  in $[0,2n] \subset \R /2N\Z$. Hence,  $p_j \Delta(\varphi)$ is not contained in $\Pi$. 

Arguing by contradiction, assume instead that $X=Q_i$ for some $i \in [1,l]$. Then $$\Delta(Q_i;\phi^t_{H_{p_j}}, [u])= p_j\Delta(Q_i,  \phi^t_H, [v]) =0 \mod 2N.$$ Since  $N>n$, it then follows from \eqref{ber} that $\Delta(Q_i;\phi^t_{H_{p_j}}, [u]) =0$.
We also have   $ \HF^{\loc}_n(H_{p_j}, Q_i,u) \neq 0$. So, the point $Q_i$ is a symplectically degenerate maximum of $\phi^1_{H_{p_j}} = \varphi^{p_j}$. Theorem \ref{118plus}  then implies that $\varphi^{p_j}$ has contractible periodic points of arbitrarily large prime period. This contradicts the hypothesis of Theorem \ref{resplus} that $\varphi$ does not have contractible fixed points of arbitrarily large period in $\mathbb{P}^2$. 
\end{proof}

To conclude, we note that Proposition \ref{notdense} implies that the set 
$$
\{\left((p_j\Delta_1), \dots, (p_j\Delta_m)\right) \mid j\geq j_0\}
$$
is not dense in $(0,2N)^m$.  It then follows from Theorem \ref{kprime} that the numbers 
$1, \Delta_1,\dots, \Delta_m$ must  be rationally dependent. Hence, the rank 
of $\mathcal{R}(\varphi)$ is at least one and  the proof of Theorem \ref{resplus} is complete.

\section{Period growth for Hamiltonian diffeomorphisms of the sphere}\label{fhn}
We now prove Theorem \ref{fhnew} which we again restate for convenience.
\begin{Theorem}
Let $\phi$ be a Hamiltonian diffeomorphism of $S^2$ with at least three fixed points such that 
 $\Fix(\phi^{k})$ is finite for all $k \in \N$. Either $\phi$ has infinitely many periodic points with arbitrarily large period and rational mean indices not equal to zero modulo four, or there exists a $K \in \N$ such that $\phi^K$ has infinitely many periodic points with arbitrarily large periods in $\mathbb{P}^2$.
\end{Theorem}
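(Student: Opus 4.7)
The plan is to combine Theorems \ref{118plus} and \ref{resplus} with the blow-up strategy developed in \cite{res1} to reduce the problem to a Floer-homological computation on $\T^2$. On $S^2$ we have $N = 2 > 1 = n$, every periodic orbit is contractible since $\pi_1(S^2) = 0$, and the hypothesis that $\Fix(\phi^k)$ is finite passes to every iterate, so both Theorems \ref{resplus} and \ref{118plus} apply to $\phi$ and to each $\phi^K$.

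I would argue by assuming alternative (B) fails and deriving alternative (A). If (B) fails, then for every $K$, $\phi^K$ has only finitely many contractible periodic orbits of period in $\mathbb{P}^2$, and Theorem \ref{resplus} yields, for each $K$, a nontrivial resonance relation among the nonzero mean indices of the fixed points of $\phi^K$. Using the iteration formula $\Delta(P;\phi^K) = K\Delta(P) \pmod 4$, the accumulation of these relations across iterates forces the mean indices of the fixed points of $\phi$ to be rational. I would then pick $K \in \N$ so that enough fixed points of $\phi$ have mean index $\equiv 0 \pmod 4$ when viewed as fixed points of $\phi^K$. By Theorem \ref{118plus}, none of these zero-mean-index fixed points can be a symplectically degenerate maximum, for otherwise $\phi^K$ would admit contractible periodic orbits of arbitrarily large prime period, contradicting the failure of (B).

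This puts us in the setting of \cite{res1}: $\phi^K$ has at least three isolated fixed points, some of mean index zero modulo $4$ but none of those an SDM. Following the construction there, I would blow up two fixed points of $\phi^K$ and glue to produce a symplectic diffeomorphism $\psi$ of $\T^2$ isotopic to the identity. By Proposition \ref{hom}, $\FH(\psi)$ is either trivial or isomorphic to $H_{*+1}(\T^2;\Z)$. A parallel computation of $\FH(\psi)$ via the local Floer contributions of the remaining fixed points, with gradings controlled by Lemma \ref{determine} tying Conley--Zehnder to mean indices in dimension two, should produce a discrepancy that can only be filled by additional contractible periodic orbits of $\phi^K$. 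Unwinding through the blow-up and the iteration formula, these new orbits will have rational mean indices nonzero modulo $4$; the hypothesis that $\Fix(\phi^k)$ is finite for every $k$ then forces their periods to tend to infinity, establishing alternative (A) for $\phi$.

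The main obstacle is the concluding Floer-homological step: correctly computing the local contributions of fixed points with vanishing mean index that are not SDMs, and extracting from the mismatch with Proposition \ref{hom} a controlled supply of new periodic orbits whose periods grow without bound and whose mean indices lie in $\Q \setminus 4\Z$. A secondary difficulty is propagating the blow-up and the Floer-theoretic analysis uniformly through every iterate $\phi^K$, which is the reason the hypothesis of $\Fix(\phi^k)$ being finite for every $k$ (rather than merely for $k=1$) is essential.
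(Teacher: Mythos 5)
Your high-level plan (reduce to Theorems \ref{118plus} and \ref{resplus}, blow up to a torus map, contradict Proposition \ref{hom}) is in the right ballpark, but several pivotal steps are wrong or reversed relative to what actually works.

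First, the claim that ``the accumulation of these relations across iterates forces the mean indices of the fixed points of $\phi$ to be rational'' is unsupported and in fact false. A resonance relation $a_1\Delta_1+\cdots+a_m\Delta_m\equiv 0\pmod{2N}$ can hold with every $\Delta_i$ irrational (e.g.\ $\Delta_1=\sqrt2$, $\Delta_2=4-\sqrt2$ and $a_1=a_2=1$ on $S^2$), and the relations for $\phi^K$ on the old fixed points are just the $K$-multiples of those for $\phi$, so ``accumulating'' them across iterates yields nothing new. Worse, the intended use of Theorem \ref{resplus} in the paper is exactly the opposite of yours: after the sorting stabilizes, the remaining nonzero mean indices are all irrational, and the nontrivial resonance relation is used to show there are at least \emph{two} such irrational-mean-index fixed points (a single irrational number cannot satisfy a nontrivial integer relation mod $2N$). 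Those two points $X,Y\in\Ii$ are precisely the ones that get blown up. If, as you propose, you could pass to a $K$ making all mean indices $\equiv 0\pmod 4$, the blow-up argument would have nothing to stand on.

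Second, the Floer-homological endgame is misconceived. In the paper one does not extract ``additional contractible periodic orbits'' from a mismatch between the computed $\FH(\psi)$ and Proposition \ref{hom}; the mismatch itself is the contradiction. One shows (Proposition \ref{not} in the degenerate case, Proposition \ref{endpath2} in the nondegenerate case) that some fixed point represents a nontrivial Floer class of degree of absolute value greater than one, or that no contractible fixed point has Conley--Zehnder index one, and either of these is incompatible with the dichotomy of Proposition \ref{hom}. Nothing in that computation manufactures new orbits of $\phi^K$, let alone orbits with rational mean index nonzero mod 4.

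Third, you omit the mechanism by which alternative (A) actually arises. The paper runs a recursive sorting procedure on $\Fix(\phi)$, $\Fix(\phi^{K_1})$, $\Fix(\phi^{K_1K_2}),\dots$, at each stage passing to the iterate that kills the rational-nonzero-mod-4 mean indices seen so far. Alternative (A) is the case where this recursion never terminates: then one produces a sequence of periodic points $Q_i$ with periods at least $2^{i-1}$ and rational mean index $\not\equiv 0\pmod 4$. Alternative (B) is what one shows when the recursion does terminate. Your plan instead tries to deduce (A) directly from the failure of (B), which is logically permissible but requires an argument you have not supplied, and the specific route you sketch (rationality of mean indices, new orbits from Floer theory) does not lead there.
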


\subsection{Sorting and a reduction to two cases}
We begin by describing  a recursively defined sorting procedure. First we sort the fixed points of $\phi$ into  three disjoint subsets defined in terms of their mean indices;
\begin{equation}
\label{ }
\Fix(\phi) = \mathcal{Z}_0 \cup \Ii_0 \cup \mathcal{Q}_0 
\end{equation}
where $P$ is in $\mathcal{Z}_0$ if 
$$\Delta(P)=0 \mod 4,$$  $P$ is in $\Ii_0$ if  $\Delta(P)$ is irrational 
and $P$ is in $\mathcal{Q}_0$ otherwise. 
If $\mathcal{Q}_0 = \emptyset$ we stop. Otherwise we set
$$K_1 = \min\{ k \in \N \mid k\Delta(P) =  0 \mod 4 \text{  for all  }  P \in \mathcal{Q}_0\},$$
and then  consider the map $\phi^{K_1}$. Sorting its fixed points into three disjoint sets as before, we get
\begin{equation}
\label{ }
\Fix(\phi^{K_1}) = \mathcal{Z}_1 \cup \Ii_1 \cup \mathcal{Q}_1.
\end{equation}
Clearly,  $\Fix(\phi) \subset \Fix(\phi^{K_1})$. In particular, it follows from the iteration formula for the mean index 
and our choice of $K_1$ that 
$\mathcal{Z}_0 \cup \mathcal{Q}_0 \subset \mathcal{Z}_1$, $\Ii_0 \subset \Ii_1$ and any element 
of $\mathcal{Q}_1$ is a periodic point of $\phi$ whose  period lies in $(1,K_1]$ and divides $K_1$.  If $\mathcal{Q}_1 = \emptyset$
we stop otherwise we define $K_2$ as above, and proceed in the same manner. 
Either we can repeat this procedure forever or it terminates at some first step, say the $N$th, at which $Q_N$ is empty.
In the latter case there must be a sequence of periodic points $Q_i \in \mathcal{Q}_i$ whose mean indices are rational and not equal to zero modulo four, such that the period of $Q_i$ is greater than $\prod_{j=1}^{i-1}K_j \geq 2^{i-1}$. This corresponds to the first  period growth alternative of Theorem \ref{fhnew}. 

It  remains to prove that if $Q_N = \emptyset$ for some finite $N$ then for some $K \in \N$ the map $\phi^K$
has periodic points with arbitrarily large periods in $\mathbb{P}^2$. Arguing by contradiction, we set 
 $\varphi = \phi^{K_1 \cdots K_N }$ and assume that $\varphi$ does not have infinitely many periodic points with arbitrarily large periods in $\mathbb{P}^2$.

By our choice of $\varphi$, we have  
\begin{equation}
\label{ }
\Fix(\varphi) = \mathcal{Z} \cup \Ii
\end{equation}
where the points in $\mathcal{Z}$ have zero mean index modulo four, and the points in $\Ii$ have irrational mean indices.

\begin{Lemma}\label{two}
The set $\Ii$ has at least two elements.
\end{Lemma}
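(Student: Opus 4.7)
The plan is to apply Theorem~\ref{resplus} to rule out $|\Ii|=1$, and a Floer-theoretic argument invoking Theorem~\ref{118plus} to rule out $|\Ii|=0$. Observe first that $\varphi$ meets the hypotheses of Theorem~\ref{resplus}: $S^2$ is closed, weakly-monotone and rational, and $n=1$, $N=2$ satisfy $N>n$; every fixed point of $\varphi$ is contractible since $\pi_1(S^2)=0$; $\Fix(\varphi)$ is finite by the hypothesis on $\phi$; and by the standing contradiction hypothesis $\varphi$ has no contractible periodic points of arbitrarily large period in $\mathbb{P}^2$. Since $2N=4$, the nonzero mean indices of $\Fix(\varphi)$ are exactly the irrational mean indices of the $m=|\Ii|$ points of $\Ii$, while points of $\mathcal{Z}$ contribute zero. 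Theorem~\ref{resplus} then yields a nonzero integer vector $(a_1,\ldots,a_m)$ with $\sum_i a_i\Delta(P_i)\equiv 0\pmod 4$. If $m=1$, the single relation $a_1\Delta(P_1)\in 4\Z$ with $a_1\neq 0$ would make $\Delta(P_1)$ rational, contradicting $P_1\in\Ii$.

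To rule out $m=0$, I would assume every fixed point of $\varphi$ lies in $\mathcal{Z}$ and pick $H$ with $\varphi=\phi^1_H$. Since $N>n$, the fundamental class of $\HQ_{2n}(M)$ yields $\HF_n(H)=\HF_1(H)\neq 0$. By the usual filtration on the Floer complex whose associated graded is the direct sum of local Floer complexes, some $P\in\Fix(\varphi)$ carries $\HF^\loc_d(H,P,u)\neq 0$ for some spanning disc $u$ of $\phi^t_H(P)$ and some $d\equiv 1\pmod 4$. The index-interval property used around~\eqref{ber}, a consequence of~\eqref{closer} together with the construction of local Floer homology, gives $\Delta(P;\phi^t_H,[u])\in[d-n,d+n]=[d-1,d+1]$. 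Because $P\in\mathcal{Z}$ we have $\Delta(P;\phi^t_H,[u])\in 4\Z$, and the unique multiple of $4$ in $[d-1,d+1]$ is $d-1$. Replacing $u$ by $u'=u\#A$ for a sphere $A$ with $c_1(A)=(d-1)/2$ shifts both the $\Z$-grading of $\HF^\loc$ and the mean index by $d-1$, producing $u'$ with $\Delta(P;\phi^t_H,[u'])=0$ and $\HF^\loc_n(H,P,u')\neq 0$. Thus $P$ is a symplectically degenerate maximum, and Theorem~\ref{118plus} forces infinitely many contractible periodic points of $\varphi$ of arbitrarily large prime period. Since primes lie in $\mathbb{P}^2$, this contradicts the standing assumption.

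The main technical point is the $m=0$ case: one must combine the shift freedom for local Floer homology under changes of spanning disc with the constraint $\Delta(P;[u])\in 4\Z$ coming from $P\in\mathcal{Z}$ to pin down, via the index-interval bound, a single spanning disc $u'$ realizing both conditions $\Delta(P;[u'])=0$ and $\HF^\loc_n(H,P,u')\neq 0$ simultaneously, as required by the definition of an SDM.
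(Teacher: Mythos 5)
Your proof is correct, and the $m=1$ step coincides with the paper's. Where you diverge is in treating $m=0$ as a separate case: the paper's proof is just two sentences, relying on the observation that Theorem~\ref{resplus} asserts the \emph{existence} of a nontrivial resonance relation among the nonzero mean indices, and a nontrivial integer relation $\sum a_i \Delta_i \equiv 0 \pmod 4$ among irrational numbers requires at least two of them — this kills $m=0$ and $m=1$ simultaneously. (The case $m=0$ is not vacuous for the theorem: its proof, via Proposition~\ref{notdense}, derives a contradiction precisely when every fixed point has mean index $\equiv 0 \bmod 2N$, so the theorem's conclusion genuinely forces $m\geq 1$.) Your caution about whether the theorem applies when there are no nonzero mean indices is defensible on a literal reading of the statement, but the argument you supply to rule out $m=0$ — locate a fixed point with $\HF^{\loc}_d\neq 0$ for $d\equiv n\bmod 2N$, pin down $\Delta\in[d-n,d+n]\cap 4\Z=\{d-1\}$, then reglue the spanning disc by a sphere of Chern number $(d-1)/2$ (an even integer since $d\equiv 1\bmod 4$, hence realizable on $S^2$) to exhibit a symplectically degenerate maximum, and invoke Theorem~\ref{118plus} — is in effect a re-derivation, for $\varphi$ itself rather than its prime iterates, of exactly the argument carried out inside the proof of Proposition~\ref{notdense}. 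So nothing is wrong, but you are duplicating work that is already packaged into Theorem~\ref{resplus}.
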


\begin{proof}
By  assumption, Theorem \ref{resplus} applies to $\varphi$ and implies that  the mean indices of the elements of $\Ii$ must satisfy at least one nontrivial resonance relation.
Since these mean indices are irrational there are at least two elements of $\Ii$. 
\end{proof}

\begin{Lemma}
The set $\mathcal{Z}$ is not empty.
\end{Lemma}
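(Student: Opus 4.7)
The plan is to argue by contradiction: suppose $\mathcal{Z}=\emptyset$, so every fixed point of $\varphi$ has irrational mean index. I will then show this forces $|\Fix(\varphi)|=2$, contradicting the bound $|\Fix(\varphi)| \geq |\Fix(\phi)| \geq 3$ that comes from $\Fix(\phi)\subseteq \Fix(\varphi)$ and the hypothesis on $\phi$.

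First I would establish that, in dimension two, a fixed point with irrational mean index is automatically nondegenerate. The linear endpoint $A=d\varphi|_P\in Sp(2,\R)$ is one of: hyperbolic (mean index an integer, since the linearized flow carries no rotation in the eigenvector frame); the identity (the path $d\phi^t_H|_P$ is then a loop in $Sp(2)$ whose mean index equals twice the Maslov integer of the loop, hence an even integer); a nontrivial unipotent (mean index again an even integer, as follows from an explicit Salamon-Zehnder computation on the iterated path $A^N$, whose Robbin-Salamon index grows linearly in $N$ with even-integer slope); or elliptic. Only the elliptic case with irrational rotation number can produce irrational $\Delta(P)$, and there the eigenvalues $e^{\pm i\theta}$ of $A$ are not equal to $1$. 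Hence every fixed point of $\varphi$ is nondegenerate.

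Next, Lemma \ref{determine} assigns each fixed point the odd integer closest to its (irrational) mean index as its Conley-Zehnder index. Let $H$ be a Hamiltonian with $\varphi=\phi^1_H$. Since $\pi_1(S^2)=0$, every fixed point is contractible, so the Hamiltonian Floer chain complex $\CF_*(H)$ is a free $\Z/4$-graded module over the Novikov ring $\Lambda$ with one generator per element of $\Fix(\varphi)$, and all of these generators lie in odd degrees. The Floer differential has degree $-1$ and so sends odd-degree generators into even degrees, where there are no generators to receive them; hence $\partial=0$ and $\HF_*(H)\cong \CF_*(H)$ as graded $\Lambda$-modules.

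Finally, $\HF_*(H)\cong \mathrm{HQ}_{*+1}(S^2)$ has rank $2$ over $\Lambda$ while $\operatorname{rank}_\Lambda \CF_*(H)=|\Fix(\varphi)|$, so $|\Fix(\varphi)|=2$, contradicting $|\Fix(\varphi)|\geq 3$. The main obstacle is the first step: rigorously verifying that irrationality of the mean index in dimension two forces the linearization to be nondegenerate. This relies on case-by-case computations of the Salamon-Zehnder mean index for paths in $Sp(2,\R)$ whose endpoint has $1$ as an eigenvalue, which are standard but must be handled carefully (see \cite{lo}).
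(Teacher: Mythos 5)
Your proof is correct, but it takes a genuinely different route from the paper's. Both arguments begin by assuming $\mathcal{Z}=\emptyset$, invoking $|\Fix(\varphi)|\geq 3$ via $\Fix(\phi)\subseteq\Fix(\varphi)$, and observing that irrational mean index in dimension two forces nondegeneracy and ellipticity. At that point the paper stops short and applies the Lefschetz fixed point theorem: each elliptic fixed point contributes $+1$ to the Lefschetz sum, so $\chi(S^2)=2=|\Fix(\varphi)|\geq 3$, a contradiction. You instead push through to Floer homology: Lemma \ref{determine} gives odd Conley--Zehnder indices, the mod-$4$-graded Floer chain complex then has no even-degree generators, the degree $-1$ differential must vanish, and so $\HF_*(H)\cong\CF_*(H)$ has rank $|\Fix(\varphi)|\geq 3$ over the Novikov ring, contradicting $\operatorname{rank}\mathrm{HQ}(S^2)=2$. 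The paper's argument is shorter and more elementary (pure topology, no Floer machinery), while yours is more in keeping with the ``symplectic tools only'' theme of the paper and also packages the same Euler-characteristic information at the chain level; in fact the Lefschetz argument is essentially the Euler-characteristic shadow of your Floer computation. One minor stylistic note: your first step (irrational mean index $\Rightarrow$ nondegenerate) is carried out with more case analysis than strictly necessary; in $Sp(2,\R)$ a degenerate endpoint forces both eigenvalues to equal $1$, and any path ending there has integer mean index, so the conclusion follows quickly. Both proofs are valid.
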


\begin{proof}
Arguing by contradiction assume that $\Fix(\varphi) = \Ii$. Since $\Fix(\phi) \subset \Fix(\varphi)$ it follows from the hypotheses of Theorem \ref{fhnew} that there must be at least 
three elements of $\Fix(\varphi)$.  Since they have irrational mean indices, each of these fixed points is nondegenerate  and elliptic and so their  topological indices are all even. By the Lefschetz fixed point theorem we then have the Euler characteristic of $S^2$ equal to $|\Fix(\varphi)| >2$, a contradiction.
\end{proof}

Following \cite{res1} we now separate the proof into two cases; the one in which at least 
one point of $\mathcal{Z}$  is degenerate, and the other in which every element of $\mathcal{Z}$ is nondegenerate.  
In both cases we will derive a contradiction to Proposition \ref{hom}.

\subsection{Case 1: at least 
one point in $\mathcal{Z}$ is degenerate}
 
Assume that $\varphi$ as above has a fixed point, say  $W$, which is degenerate and satisfies $\Delta(W) = 0 \mod 4$. 
By Lemma \ref{two} we can also choose two elements $X$ and $Y$ of $\Ii$.
\subsubsection{Step 1: A useful generating Hamiltonian for $\varphi$}\label{staticham} We  first  choose a generating Hamiltonian $H \colon \R/\Z \times S^2 \to \R$ for $\varphi$ such that  $W$ and $X$ are fixed points of $\phi^t_H$ for all $t \in \R$. The construction of such a Hamiltonian is described in \S 3.3.1 of \cite{res1}.

For each $k \in \N$, the Hamiltonian diffeomorphism $\varphi^k$ is then generated by the Hamiltonian 
\begin{equation*}
\label{ }
H_{k}(t,P)=kH(kt, P)
\end{equation*}
and $\phi^t_{H_{k}} = \phi^{kt}_H$ for all $t \in \R$. So, $W$ and $X$ are still static fixed points of 
the flow of $H_{k}$. 

\subsubsection{Step 2: A useful perturbation} Under our assumption that 
$\varphi$ does not have periodic points of arbitrarily large period in $\mathbb{P}^2$, there is a $j_0 \in \N$ such that  
\begin{equation}
\label{ }
\Fix(\varphi^{p_j^2}) = \Fix{\varphi}
\end{equation}
for all $j \geq j_0$. For each such $j$ we now describe a perturbation of the time-one flow of the Hamiltonian $H_{p_j^2}$.

\begin{Lemma}(Lemma 3.4, \cite{res1})\label{k-pert}
For each $j \geq j_0$ there is a neighborhood $U_j$ of $W$ and a Hamiltonian flow $\widetilde{\phi}_{j,t}$ which
 is arbitrarily $C^{\infty}$-close to $\phi^t_{H_{p_j^2}}$, is equal to $\phi^t_{H_{p_j^2}}$ outside of $U_j$, and whose fixed point set has the form
 \begin{equation*}
\label{ }
\Fix(\widetilde{\phi}_{j,1}) =  \Fix(\varphi) \cup \{ W_1, \dots, W_d\},
\end{equation*}
where
\begin{itemize}
  \item[(i)]  $W$ is a fixed point of $\widetilde{\phi}_{j,t}$ for all $t$, an elliptic fixed point of $\widetilde{\phi}_{j,1}$, and 
\begin{equation}
\label{ }
\Delta(W; \widetilde{\phi}_{j,t}, [\xi^{p_j^2}]) = p_j^2 \Delta(W; \phi^t_{H}, [\xi]) + \lambda/\pi
\end{equation}
where $[\xi]$ is any class of symplectic trivializations, and $\lambda>0$ is arbitrarily close to $0$. 
  \item[(ii)] the $W_i$ are all contained in $U_j$, they are nondegenerate, and $\Delta(W_i; \widetilde{\phi}_{j,t}, [\xi^{p_j^2}])$ is arbitrarily close to  $p_j^2\Delta(W; \phi^t_{H}, [\xi])$ for $i =1, \dots, d$ and any choice of $[\xi]$.
  \item[(iii)] none of the $\widetilde{\phi}_{j,t}$ trajectories of the remaining fixed points of $\widetilde{\phi}_{j,1}$ enter $U_j$. 
\end{itemize}

\end{Lemma}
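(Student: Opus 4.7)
The lemma is an adaptation, with $p_j^2$ in place of $p$, of Lemma 3.4 in \cite{res1}, and the plan is to follow that strategy essentially verbatim. I would begin by choosing Darboux coordinates $(x,y)$ on a small open disc $D$ centered at $W$ so that $\omega = dx\wedge dy$ and $W$ corresponds to the origin. Because $W$ is a static fixed point of $\phi^t_H$, it is a critical point of $H(t,\cdot)$ at every time, and hence also of $H_{p_j^2}(t,\cdot)$. By the standing hypothesis that $\Fix(\varphi^{p_j^2}) = \Fix(\varphi)$ together with the finiteness of the latter set, $W$ is the unique fixed point of $\phi^1_{H_{p_j^2}}$ in a sufficiently small neighborhood, and I would shrink $D$ to lie inside such a neighborhood.

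The perturbation is obtained by adding an autonomous bump of rotational type. Pick a non-negative cutoff $\rho$ supported in $D$, identically one on a smaller concentric disc, and set
\begin{equation*}
F(x,y) = \rho(x,y)\cdot \tfrac{\lambda}{2}(x^2+y^2)
\end{equation*}
for a small positive parameter $\lambda$. Let $\widetilde{H}_j(t,z) = H_{p_j^2}(t,z) + F(z)$, write $\widetilde{\phi}_{j,t}$ for its Hamiltonian flow, and take $U_j := \supp(\rho)$. The Hamiltonian vector field of $F$ vanishes at the origin and generates an approximate rotation there, so $W$ remains a static fixed point of $\widetilde{\phi}_{j,t}$. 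Near $W$, the linearized time-one flow is the composition of the (degenerate) linearization of $\phi^1_{H_{p_j^2}}$ at $W$ with the rotation by angle $\lambda$; for small $\lambda>0$ the degenerate eigenvalue $1$ splits off the real axis, making $W$ elliptic, and the additive shift $\lambda/\pi$ in the mean index is the standard effect of pre-composition with a planar rotation in the formulas of \cite{SZ}. This establishes (i).

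For (ii), the $C^2$-size of $F$ is controlled by $\lambda$, so $\widetilde{\phi}_{j,t}$ can be made arbitrarily $C^{\infty}$-close to $\phi^t_{H_{p_j^2}}$ and coincides with it outside $U_j$. The isolated but degenerate fixed point $W$ of $\phi^1_{H_{p_j^2}}$ unfolds under this small rotational twist into a finite collection of nondegenerate fixed points $W_1,\dots,W_d$ of $\widetilde{\phi}_{j,1}$ clustered in $U_j$ near $W$; their mean indices with respect to $[\xi^{p_j^2}]$ are close to $p_j^2\Delta(W;\phi^t_H,[\xi])$ by the continuity property of the mean index recalled in Section \ref{meanindex}. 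Property (iii) is secured by further shrinking the support of $\rho$: since the only $\phi^t_{H_{p_j^2}}$-trajectory meeting $W$ is the constant one, the remaining (finitely many) fixed-point trajectories stay at positive distance from $W$ throughout $[0,1]$ by continuity and compactness, and $U_j$ can be chosen strictly inside this distance.

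The main obstacle is ensuring that the unfolded critical set is genuinely \emph{nondegenerate} rather than merely less degenerate. The treatment in \cite{res1} handles this via a Sard-type genericity argument on $\lambda$, combined with the two-dimensional local setting where the return map can be analyzed explicitly in Birkhoff normal form. Since nothing in that argument is sensitive to the arithmetic nature of the iteration exponent, and both $p_j$ and $p_j^2$ function there purely as large positive integers, the conclusion transfers with only cosmetic changes.
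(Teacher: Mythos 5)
The paper does not prove this lemma: it is imported wholesale from \cite{res1}, so there is no internal argument to compare against and you are in effect reconstructing Lemma~3.4 of \cite{res1}. Your overall plan (Darboux chart at the static fixed point, cutoff rotational bump, Sard-type genericity for the unfolded fixed points, shrinking the support to secure~(iii)) is in the right spirit, but the step that delivers the exact mean-index formula in~(i) rests on a false assertion.

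You add $F = \rho\cdot\tfrac{\lambda}{2}(x^2+y^2)$ to $H_{p_j^2}$ and claim that ``the linearized time-one flow is the composition of the (degenerate) linearization of $\phi^1_{H_{p_j^2}}$ at $W$ with the rotation by angle $\lambda$.'' The time-one flow of a \emph{sum} of Hamiltonians is not the composition of their time-one flows. At the static point $W$ the linearized flow of $H_{p_j^2}+F$ solves $\dot{\tilde A} = J\bigl(Q(t)+\lambda I\bigr)\tilde A$, where $Q(t)$ is the Hessian of $H_{p_j^2}(t,\cdot)$ at $W$, and $JQ(t)$ need not commute with $\lambda J$, so $\tilde A(t)\neq R_{\lambda t}A(t)$. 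Concretely, if $Q(t)\equiv\mathrm{diag}(1,0)$ (a shear with unperturbed mean index $0$), then $J(Q+\lambda I)$ has eigenvalues $\pm i\sqrt{\lambda(1+\lambda)}$ and the perturbed mean index is $\sqrt{\lambda(1+\lambda)}/\pi$, not $\lambda/\pi$; worse, if $Q(t)\equiv\mathrm{diag}(-1,0)$ then $J(Q+\lambda I)$ has real eigenvalues for small $\lambda>0$, so $W$ becomes hyperbolic rather than elliptic. Thus your additive bump yields neither the exact increment $\lambda/\pi$ nor, in general, ellipticity. What does give both is to \emph{compose} $\phi^t_{H_{p_j^2}}$ with a Hamiltonian isotopy supported near $W$ whose germ at $W$ is a genuine rotation by $\lambda$ (realized as the flow of a single Hamiltonian by the usual time-reparametrized concatenation, with the sign of $\lambda$ dictated by the shear direction at $W$); the catenation property of the mean index then gives $\Delta(W;\widetilde\phi_{j,t},[\xi^{p_j^2}]) = p_j^2\Delta(W;\phi^t_H,[\xi]) + \lambda/\pi$ on the nose, and the boundary circle $\Gamma_W$ in the blow-up of \S\ref{complete} is rotated by exactly $\lambda$, which is precisely what the rest of the argument uses. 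Items~(ii) and~(iii) of your sketch carry over to this composed perturbation without change.
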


\subsubsection{Step 3: transfer of dynamics to the torus}\label{complete}
We now construct  from $\widetilde{\phi}_{j,t}$ a symplectic isotopy $\psi_{j,t}$ of the torus.
We first blow-up the points $W$ and $X$ and complete the restriction of 
$\widetilde{\phi}_{j,1}$ to $S^2 \smallsetminus \{W,X\}$  to an area preserving diffeomorphism  of the closed annulis $[-1,1] \times \R/2\pi \Z$. The resulting map,  $\overline{\phi}_{j}$, acts on the boundary circles, $\Gamma_W = \{1\} \times  \R/2\pi \Z$ and  $\Gamma_X = \{-1\} \times  \R/2\pi \Z$, as the rotation by  $\lambda$ and $\pi \Delta(X; \widetilde{\phi}_{j,t}, [\xi])$, respectively,  for any choice of the class $[\xi]$.  Note that 
\begin{eqnarray*}
\Fix(\overline{\phi}_{j}) & = & ( \Fix(\varphi) \smallsetminus \{W,X\}) \cup \{ W_1, \dots, W_d\} \\
{} & = & ( \mathcal{Z} \smallsetminus W) \cup ( \Ii \smallsetminus X) \cup\{ W_1, \dots, W_d\}. 
\end{eqnarray*}

Following Arnold  (Appendix 9 of \cite{ar}), we now extend the map $\overline{\phi}_{j}$ to the torus  formed by gluing two copies of the domain annulus $[-1,1] \times \R/2\pi \Z$ along their common boundaries after  inserting two  connecting cylinders.  This allows us to extend the  map $\overline{\phi}_{j}$  to an area preserving map  $\psi_j$ of  $(\T^2, \Omega)$ which agrees with $\overline{\phi}_{j}$ on the two annuli, and is defined on the connecting cylinders so that  the overall map is smooth and  has no new fixed points. In particular,
$\Fix(\psi_j)$ consists of  two copies of $\Fix(\overline{\phi}_{j})$, which we denote by 
 \begin{equation*}
\label{ }
\Fix(\overline{\phi}^{\pm}_{j})=  ( \mathcal{Z} \smallsetminus W)^{\pm} \cup ( \Ii \smallsetminus X)^{\pm} \cup\{ W^{\pm}_1, \dots, W^{\pm}_d\}.
\end{equation*}
The isotopy $\widetilde{\phi}_{j,t}$ induces a smooth isotopy $\psi_{j,t}$ from the identity to $\psi_j$.

\subsubsection{The contradiction.} 
There are two fixed points of $\psi_j$, $Y^{\pm}$, corresponding to $Y$. The following result concerning the role of $Y^+$ in 
 the Floer complex generating the Floer homology $\FH(\psi_j)$ implies the desired contradiction.

\begin{Proposition}\label{not}
If  $j \geq j_0$ is sufficiently large  then $Y^+$ represents a nontrivial class in $\FH(\psi_j)$,  and if $Y^+$ is contractible then  the degree of the class $[Y^+]$ is  greater than one
in absolute value.
\end{Proposition}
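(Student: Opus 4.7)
The plan is to prove the proposition by first estimating the Conley--Zehnder index of $Y^+$ and then running an isolation argument inside the Floer complex $\CF_*(\psi_j; c_{Y^+})$, valid for $j$ sufficiently large, where $c_{Y^+}\in\mathrm{H}_1(\T^2;\Z)$ denotes the class of the loop $t\mapsto\psi_{j,t}(Y^+)$.

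For part (ii), I would begin by computing the mean index $\Delta(Y^+;\psi_{j,t},[\xi^{p_j^2}])$. Since $Y\in\Ii$, the value $\vartheta_Y:=\Delta(Y;\phi^t_H,[\xi])$ is irrational and nonzero, and the iteration formula \eqref{iter} gives $\Delta(Y;\phi^t_{H_{p_j^2}},[\xi^{p_j^2}]) = p_j^2\vartheta_Y$. Applying the continuity property of the mean index to the $C^\infty$-small perturbation from Lemma \ref{k-pert}, and then tracking the induced trivializations through the blow-up at $W,X$ and the torus extension of \S\ref{complete}, one sees that $\Delta(Y^+;\psi_{j,t},[\xi^{p_j^2}])$ differs from $p_j^2\vartheta_Y$ by an amount bounded independently of $j$. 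Since $\vartheta_Y$ is irrational, this mean index is a non-integer, so Lemma \ref{determine} identifies $\mu(Y^+)$ as the odd integer closest to it. In particular $|\mu(Y^+)|\to\infty$ as $j\to\infty$, and for large $j$ we have $|\mu(Y^+)|>1$, giving the degree bound under the Conley--Zehnder grading appearing in \eqref{hfham}.

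For part (i), I would classify the generators of $\CF(\psi_j)$ by type in order to show that $Y^+$ is essentially isolated in its class and degree. The generators from $(\Ii\smallsetminus X)^\pm$, including $Y^+$, are nondegenerate and elliptic (irrational mean index on a surface), so by Lemma \ref{determine} their Conley--Zehnder indices are odd and grow like $p_j^2\vartheta_Z+O(1)$ for various irrational $\vartheta_Z$. In contrast, generators from $(\mathcal{Z}\smallsetminus W)^\pm$ have mean indices $\equiv 0\bmod 4$ in $\varphi$ and so produce CZ indices of the form $p_j^2 q + O(1)$ for a rational $q$ (possibly zero, depending on winding), and the points $W_i^\pm$ from Lemma \ref{k-pert}(ii) have CZ indices that remain $O(1)$. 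For $j$ large enough, the irrationality of $\vartheta_Y$ forces both $|p_j^2(\vartheta_Y-\vartheta_Z)|$ and $|p_j^2(\vartheta_Y-q)|$ to exceed any fixed constant; combined with the parity constraint, which prevents odd-index elliptic generators from landing at the even degrees $\mu(Y^+)\pm 1$, no generator other than $Y^+$ and possibly $Y^-$ (which has the same CZ index as $Y^+$, if it lies in the same class $c_{Y^+}$) appears in the window $[\mu(Y^+)-1,\mu(Y^+)+1]$ inside $\CF_*(\psi_j;c_{Y^+})$. Since the Floer differential strictly lowers degree by one, it follows that $\partial Y^+=0$ and $Y^+\notin\im\partial$, so $[Y^+]$ is a nontrivial class in $\FH(\psi_j;c_{Y^+})\subset\FH(\psi_j)$.

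The main technical obstacle will be the careful bookkeeping needed to track mean and Conley--Zehnder indices of $Y^+$, and of the other fixed points of $\psi_j$, through the blow-up, completion to an annulus diffeomorphism, and extension to the torus described in \S\ref{complete}. In particular, the rotation numbers $\lambda$ on $\Gamma_W$ and $\pi\Delta(X;\widetilde{\phi}_{j,t})$ on $\Gamma_X$, together with the trivializations chosen on the connecting cylinders, must be shown to contribute only an $O(1)$ correction to the key estimate $\Delta(Y^+;\psi_{j,t},[\xi^{p_j^2}])=p_j^2\vartheta_Y+O(1)$, and the identification of the rational slopes $q$ appearing for generators in $\mathcal{Z}^\pm$ requires understanding the winding of $\phi^t_H$-orbits in $S^2\smallsetminus\{W,X\}$. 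Once these uniform estimates are in hand, the asymptotic comparison of CZ indices goes through and both parts of the proposition follow.
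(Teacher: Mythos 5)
Your strategy is essentially the one used in the paper: estimate $\mu(Y^+)$ via the mean index $p_j^2\,\Delta(Y;\phi^t_H,[\xi])$ and Lemma~\ref{determine}, then show every other generator in the relevant class is either at the same degree or separated by more than one, so $Y^+$ lies in $\ker\partial\smallsetminus\operatorname{im}\partial$. Two points where your outline diverges from (and falls slightly short of) the paper's proof are worth flagging.

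First, your claim that the CZ indices of the points $W_i^\pm$ ``remain $O(1)$'' is not justified and is generally false. Lemma~\ref{k-pert}(ii) gives that $\Delta(W_i;\widetilde\phi_{j,t},[\xi^{p_j^2}])$ is arbitrarily close to $p_j^2\,\Delta(W;\phi^t_H,[\xi])$, and the latter is $p_j^2$ times an \emph{integer} (a multiple of four) which need not vanish for the trivialization coupled to the one chosen along $\phi^t_H(Y)$. The correct separation argument is the one the paper uses for Subcase~2: $\Delta(W;\phi^t_H,[\xi])$ is rational while $\Delta(Y;\phi^t_H,[\xi])$ is irrational, so $p_j^2\,|\Delta(W)-\Delta(Y)|\to\infty$, and hence the $W_i^\pm$ are eventually far in degree from $Y^+$. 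Your version would break down if $\Delta(W;\phi^t_H,[\xi])\neq 0$.

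Second, you treat the blow-up, annulus completion, and torus extension as introducing an $O(1)$ error in $\Delta(Y^+;\psi_{j,t},[\xi^{p_j^2}_+])$ and identify this as the main obstacle. In the paper there is no error term: by Lemma~\ref{k-pert}(iii) the $\widetilde\phi_{j,t}$-trajectory of $Y$ stays outside $U_j$, so the linearized isotopy along $Y^+$ on the torus is exactly the linearized isotopy of $\phi^t_{H_{p_j^2}}$ along $Y$, and with the induced class $[\xi^{p_j^2}_+]$ the mean index is precisely $p_j^2\,\Delta(Y;\phi^t_H,[\xi])$. Tracking the trivializations exactly rather than up to $O(1)$ both removes your stated obstacle and makes the parity sub-argument unnecessary: the paper works directly with the dichotomy ``equal mean indices $\Rightarrow$ equal CZ indices by Lemma~\ref{determine}'' versus ``mean indices differ by more than $3$ $\Rightarrow$ CZ indices differ by more than $1$ via \eqref{closer} and continuity'', which handles the degenerate points in $\mathcal{Z}\smallsetminus W$ after perturbation as well.
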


\begin{proof}

Fix a class $[{\xi}]$ of symplectic trivializations of $TS^2$ along $\phi^t_{H}(Y)$. The resulting class $[{\xi}^{p_j^2}]$ of symplectic trivializations of $TS^2$ along $\phi^t_{H_{p_j}}(Q)$ then determines an equivalence class $[\xi^{p_j^2}_{+}]$ of symplectic trivializations of $T\T^2$ along $\psi_{j,t}(Y^+)$.
Let $P^{\pm}$ be any fixed point of $\psi_j$ in the same homotopy class as $Y^+$ where $P$ is the corresponding fixed point of $\widetilde{\phi}_{j,t}$. Since  the Floer boundary operator decreases degrees by one, to prove the first assertion of Proposition \ref{not} it suffices  to show that for $j$  large enough 
we have either 
\begin{equation*}
\label{ }
\Delta(P^{\pm}; \psi_{j,t}, [\xi^{p_j^2}_+]) = \Delta(Y^+; \psi_{j,t}, [\xi^{p_j^2}_+])
\end{equation*}
or 
\begin{equation*}
\label{ }
|\Delta(P^{\pm}; \psi_{j,t}, [\xi^{p_j^2}_+]) - \Delta(Y^+; \psi_{j,t}, [\xi^{p_j^2}_+])|>3.
\end{equation*}
In particular, by Lemma \ref{determine} the first equality implies $$\mu(P^{\pm}; \psi_{j,t}, [\xi^{p_j^2}_+]) = \mu(Y^+; \psi_{j,t}, [\xi^{p_j^2}_+]).$$ Whereas, if the alternative inequality holds and $\widetilde{P}^{\pm}$ is a nondegenerate fixed point obtained from $P^{\pm}$ by perturbation, then it follows from the continuity property of the mean index and \eqref{closer} that
\begin{equation*}
\label{ }
|\mu(\widetilde{P}^{\pm}; \psi_{j,t}, [\xi^{p_j^2}_+]) - \mu(Y^+; \psi_{j,t}, [\xi^{p_j^2}_+])|>1.
\end{equation*}
Since there are only finitely many points $P^{\pm}$, for any $j$,  it will follow that for large enough $j$ the point $Y^+$ is in the kernel of the corresponding Floer boundary map and is not in its image thus establishing the first assertion of Proposition \ref{not}.

\noindent{\it Subcase 1: $P$ corresponds to a fixed point of $\varphi^{p_j^2}$, i.e., $P \in (\mathcal{Z} \smallsetminus W) \cup (\Ii \smallsetminus X)$.}
By construction, we have 
$$
\Delta(Y^+; \psi_{j,t}, [\xi^{p_j^2}_+])  =\Delta(Y; \phi^t_{H_{p_j}}, [\xi^{p_j^2}])
$$
and the iteration formula \eqref{iter} then implies that 
\begin{equation}
\label{yplus}
\Delta(Y^+; \psi_{j,t}, [\xi^{p_j^2}_+])  =p^2_j\Delta(Y; \phi^t_H, [\xi]).
\end{equation}

Similarly, we have
\begin{equation}
\label{ppm}
\Delta(P^{\pm}; \psi_{j,t}, [\xi^{p_j^2}_+]) = p^2_j\Delta(P; \phi^t_H, [\xi]).
\end{equation}

If $\Delta(P; \phi^t_H, [\xi])=\Delta(Y; \phi^t_H, [\xi])$, these formulas imply that 
$$\Delta(P^{\pm}; \psi_{j,t}, [\xi^{p_j^2}_+])=\Delta(Y^+; \psi_{j,t}, [\xi^{p_j^2}_+]). $$

On the other hand, if $\Delta(P; \phi^t_H, [\xi])\neq\Delta(Y; \phi^t_H, [\xi])$ then 
equations \eqref{yplus} and \eqref{ppm} yield
\begin{equation*}
\label{ }
|\Delta(P^{\pm}; \psi_{j,t}, [\xi^{p_j^2}_+]) - \Delta(Y^+; \psi_{j,t}, [\xi^{p_j^2}_+])|  = p_j^2|\Delta(Y; \phi^t_H, [\xi]) - \Delta(P; \phi^t_H, [\xi])| .
\end{equation*} 
For sufficiently large $j \in \N$, we then have 
\begin{equation*}
\label{ }
|\Delta(P^{\pm}; \psi_{j,t}, [\xi^{p_j^2}_+]) -\Delta(Y^+; \psi_{j,t}, [\xi^{p_j^2}_+])| >3.
\end{equation*}

\noindent{\it Subcase 2: $P^{\pm}=W^{\pm}_j$.} 
Since $\Delta(W^{\pm}_j; \psi_{j,t}, [\xi^{p_j^2}_+]) = \Delta(W_j; \widetilde{\phi}_{j,t}, [{\xi}^{p_j^2}])$, it follows from
Lemma \ref{k-pert}  that $\Delta(W^{\pm}_j; \psi_{j,t}, [\xi^{p_j^2}_+])$ is arbitrarily close to
$p_j^2 \Delta(W; \phi^t_H, [{\xi}]).$
By assumption, $\Delta(W; \phi^t_H, [{\xi}])$ is an integer (multiple of four) and hence not equal to 
the irrational number $\Delta(Y; \phi^t_H, [\xi])$. Arguing again as in Case 1, we see that 
for sufficiently large $j$  
\begin{equation*}
\label{ }
|\Delta(W^{\pm}_j; \psi_{j,t}, [\xi^{p_j^2}_+]) - \Delta(Y^+; \psi_{j,t}, [\xi^{p_j^2}_+])| >3.
\end{equation*}

\medskip

Finally, we prove  the second assertion of Proposition \ref{not}. If $Y^+$ is a contractible fixed point  of $\psi_j$ then 
$\phi^t_H(Y)$ is contractible in $S^2 \smallsetminus \{W,X\}$. We choose $[\xi]$ in this case so that it is determined by a spanning disc for 
$\phi^t_H(Y)$. Then the induced class $[\xi^{p_j^2}_+]$ determines the canonical grading of $\FH_*(\psi_j)$. As described above, we have
$$
\Delta(Y^+; \psi_{j,t}, [\xi^{p_j^2}_+]) = p_j^2\Delta(Y; \phi^t_H, [\xi]).
$$
Since $\Delta(Y; \phi^t_H, [\xi])$ is irrational and hence nonzero,  we therefor have 
$$ |\Delta(Y^+; \psi_{j,t}, [\xi^{p_j^2}_+])| >2$$  for large enough $j \in \N$.
Inequality \eqref{closer} then yields $ |\mu(Y^+; \psi_{j,t}, [\xi^{p_j^2}_+])| >1.$

\end{proof}

Propositions \ref{not} and \ref{hom} can not both be true. The first assertion of Proposition \ref{not} together with 
Proposition \ref{hom} implies that $\psi_j$ must be a Hamiltonian diffeomorphism, in which case $\FH_m(\psi_j;0)$ 
must be  trivial when $|m|>1$. This contradicts the second assertion of Proposition \ref{not}.

\subsection{Case 2: every element of $\mathcal{Z}$ is nondegenerate} Choose $j_0$ as in Step 2 of Case 1. That is 
\begin{equation}
\label{ }
\Fix(\varphi^{p_j^2}) = \Fix{\varphi}
\end{equation}
for all $j \geq j_0$.  Fix a $j\geq j_0$ such that the points of  $\mathcal{Z}$ are nondegenerate fixed points of $\varphi^{p_j^2}$.
Note that there infinitely many such values of $j$.

To proceed we do not need to perturb $\varphi^{p_j^2}$ as in the previous case. We instead blow-up the points  $X$ and $Y$ in $\mathcal{I}$, whose existence is guaranteed by Lemma \ref{two},  to obtain a completion of $\varphi^{p_j^2}$ as a map of the closed annulus.  Gluing this map to itself again, we get an area preserving diffeomorphism $\Psi_j$ of the torus which is isotopic to the identity. The fixed point set of $\Psi_j$  now has the form
\begin{equation}
\label{ }
\left(\mathcal{Z}^+ \cup (\Ii \smallsetminus \{X,Y\})^+\right) \cup \left(\mathcal{Z}^- \cup (\Ii \smallsetminus \{X,Y\})^-\right).
\end{equation}
As described below, the following result again contradicts  Proposition \ref{hom}.

\begin{Proposition}\label{endpath2}
 If $j\geq j_0$ is sufficiently large then no contractible fixed point of $\Psi_j$ has Conley-Zehnder index equal to one. Moreover, any $W^+ \in \mathcal{Z}^+$ represents a nontrival class in $\FH(\Psi_j)$.
\end{Proposition}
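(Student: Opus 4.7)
The plan is to adapt the calculation of Proposition \ref{not} to the setting of $W \in \mathcal{Z}$, whose mean index lies in $4\Z$ rather than being irrational. As in \S3.3.1 of \cite{res1}, I first choose the generator $H$ of $\varphi$ so that $X, Y \in \Ii$ are static fixed points of $\phi^t_H$; then $\varphi^{p_j^2}$ is generated by $H_{p_j^2}$, the points $X$ and $Y$ remain static, and the blow-up and gluing procedure that produces $\Psi_j$ and the induced isotopy $\Psi_{j,t}$ goes through as in Case 1. In particular, for trivializations chosen compatibly one retains the key identity
\begin{equation*}
\Delta(P^{\pm}; \Psi_{j,t}, [\xi^{p_j^2}_+]) = p_j^2\,\Delta(P; \phi^t_H, [\xi])
\end{equation*}
for every $P \in \Fix(\varphi)\setminus\{X,Y\}$.

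For Part 1, I compute $\mu$ at each contractible fixed point of $\Psi_j$. For $W \in \mathcal{Z}$, lift $\Delta(W;\phi^t_H,[\xi]) \in \R/4\Z$ to a value $4m \in 4\Z$, so $\Delta(W^{\pm}) = 4 p_j^2 m$. Since $j$ was chosen so that $W$ is nondegenerate for $\varphi^{p_j^2}$, the strict form of \eqref{closer} forces $\mu(W^{\pm}) = 4 p_j^2 m$ (the unique integer within distance $1$ of $4 p_j^2 m$). For $Z \in \Ii\setminus\{X,Y\}$, the value $\Delta(Z^{\pm}) = p_j^2 \Delta(Z;\phi^t_H,[\xi])$ is irrational, and Lemma \ref{determine} identifies $\mu(Z^{\pm})$ as the closest odd integer, of absolute value at least $3$ for large $j$. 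Hence every contractible fixed point of $\Psi_j$ has Conley--Zehnder index equal to $0$, to a nonzero multiple of $4 p_j^2$, or to an odd integer of absolute value at least $3$; none equals $1$.

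For Part 2, fix $W \in \mathcal{Z}$ and let $c \in H_1(\T^2;\Z)$ denote the class of the loop $\Psi_{j,t}(W^+)$. I show $W^+$ is nontrivial in $\FH(\Psi_j;c)$ by verifying that every fixed point $P^{\pm} \neq W^+$ of $\Psi_j$ in the class $c$ satisfies $\mu(P^{\pm}) = \mu(W^+)$ or $|\mu(P^{\pm}) - \mu(W^+)| > 1$. If $P \in \mathcal{Z}$, both indices lie in $4 p_j^2\,\Z$, so the difference is $0$ or at least $4 p_j^2$. If $P \in \Ii\setminus\{X,Y\}$, then $\mu(P^{\pm})$ is odd while $\mu(W^+) = 4p_j^2 m$ is even, so they cannot coincide, and the magnitude of the difference is of order $p_j^2|\Delta(P;\phi^t_H,[\xi]) - 4m|$, which exceeds $1$ for large $j$ since $\Delta(P;\phi^t_H,[\xi])$ is irrational. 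The remaining case $P^{\pm} = W^-$ gives the same $\mu$ as $W^+$ by the symmetry of the Arnold construction. Consequently $\CF_{\mu(W^+)\pm 1}(\Psi_j;c) = 0$, so $W^+$ is a cycle outside the image of $\partial$, representing a nontrivial class in $\FH_{\mu(W^+)}(\Psi_j;c) \subset \FH(\Psi_j)$.

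The chief technical obstacle will be ensuring the compatibility of trivializations on $\T^2$ with those on $S^2$ so that the displayed identity holds as written, and in particular so that the parity contrast between $\mu(W^{\pm}) \in 4\Z$ and the odd $\mu(Z^{\pm})$ is meaningful --- this is the same point already invoked tacitly in Case 1 (see equations \eqref{yplus} and \eqref{ppm}). Once this is in place, the arithmetic is even cleaner than in Case 1: the relevant differences are either zero or divisible by $4 p_j^2$, obviating the ``greater than $3$'' threshold used there.
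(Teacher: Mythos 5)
Your proof is correct and follows essentially the same route as the paper: blow up $X,Y\in\Ii$, transfer to $(\T^2,\Omega)$ as in Case~1, use the iteration formula $\Delta(P^{\pm};\Psi_{j,t},[\xi^{p_j^2}_+])=p_j^2\Delta(P;\phi_H^t,[\xi])$, the strict form of \eqref{closer} at nondegenerate points, and Lemma~\ref{determine} at irrational ones, and then show generators in degrees adjacent to $W^+$ are absent. The only genuine (and welcome) departures are cosmetic refinements: you record the exact value $\mu(W^{\pm})=4p_j^2m$ rather than merely $\mu\neq1$, and in Part~2 you compare Conley--Zehnder indices directly (using that differences for two points of $\mathcal{Z}$ land in $4p_j^2\Z$ and the even/odd parity contrast with $\Ii$) instead of passing through the ``$>3$'' mean-index threshold used by the paper in Subcases~3 and~4; this does make the arithmetic a bit cleaner. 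One small caveat worth keeping in mind: the exact values $\mu(W^{\pm})\in 4p_j^2\Z$ hold for the specific $S^2$-disc trivializations you chose, not for an arbitrary $[\xi]$, and for a noncontractible class $c$ the grading on $\CF(\Psi_j;c)$ is only relative; but since you only use \emph{differences} of indices between coupled trivializations, which are invariant under a common shift, the argument goes through exactly as in the paper.
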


\begin{proof}

Let $P^{\pm}$ be a contractible fixed point of $\Psi_j$ where $P$ denotes the corresponding fixed point of $\varphi$ and hence $\varphi^{p_j^2}$. 
Since $\pi_2(\T^2)$ is trivial, all classes of symplectic trivializations determined
by a spanning discs for $\Psi_{j,t}(P^{\pm})$  yield the same values of the mean index and Conley-Zehnder index of $P^{\pm}$.
So, in what follows we denote these simply as $\Delta(P^{\pm}; \Psi_{j,t})$ and $\mu(P^{\pm}; \Psi_{j,t})$. Since $P^{\pm}$ is contractible, $P$ must admit a spanning disc with image in $S^2 \smallsetminus \{X,Y\}$. Let $\Delta(P; \phi^t_H)$ denote the mean index computed with respect to the corresponding class of trivializations along $\phi^t_H(P)$. By \eqref{iter} we have 
\begin{equation}
\label{ktimes}
\Delta(P^{\pm}; \Psi_{j,t}) = p_j^2 \Delta(P; \phi^t_H).
\end{equation}

\noindent{\it Subcase 1: $P^{\pm} \in (\Ii \smallsetminus \{X,Y\})^{\pm}$.} In this case  $\Delta(P; \phi^t_H)$ is irrational,  and it follows from \eqref{ktimes} that  for large enough $j$ we have 
\begin{equation}
\label{ }
|\Delta(P^{\pm}; \Psi_{j,t})| = p_j^2 |\Delta(P;\phi^t_H)| > 2.
\end{equation} 
By \eqref{closer} it then follows that for sufficiently large $j$ we have 
\begin{equation*}
\label{up}
|\mu(P^{\pm}; \Psi_{j,t})| >1.
\end{equation*}

\noindent{\it Subcase 2: $P^{\pm} \in \mathcal{Z}^{\pm}$.} In this case $\Delta(P; \phi^t_H) = 0 \mod 4$. If $\Delta(P; \phi^t_H) \neq 0$
then we can argue as in the previous case to show that for sufficiently large $j$, we have $$ |\mu(P^{\pm}_{j}; \Psi_{j,t})| >1.$$ Otherwise, it follows from \eqref{ktimes} that $$\Delta(P^{\pm}; \Psi_{j,t}) = 0.$$ 
Since, by assumption, $P^{\pm}$ is nondegenerate, the strong form of \eqref{closer} applies and implies that 
\begin{equation*}
\label{down}
\mu(P^{\pm}; \Psi_{j,t}) = 0.
\end{equation*}
This settles the  first assertion of Proposition \ref{endpath2}. 

Fix a $W^+ \in \mathcal{Z}^+$ and let $W$  denote the corresponding fixed point of $\varphi$. Fix also a class $[{\xi}]$ of symplectic trivializations of $TS^2$ along $\phi^t_{H}(W)$. This determines an equivalence class $[\xi^{p_j^2}_{+}]$ of symplectic trivializations of $T\T^2$ along $\Psi_{j,t}(W^+)$.
This time let $P^{\pm}$ be any fixed point of $\Psi_j$ in the same homotopy class as $W^+$. Arguing as in Case 1, to prove the second assertion of Proposition \ref{endpath2} it suffices to show that for $j$ sufficiently large 
we have either 
\begin{equation*}
\label{ }
\Delta(P^{\pm}; \Psi_{j,t}, [\xi^{p_j^2}_+]) = \Delta(W^+; \Psi_{j,t}, [\xi^{p_j^2}_+])
\end{equation*}
or 
\begin{equation*}
\label{ }
|\Delta(P^{\pm}; \Psi_{j,t}, [\xi^{p_j^2}_+]) - \Delta(W^+; \Psi_{j,t}, [\xi^{p_j^2}_+])|>3.
\end{equation*}

\noindent{\it Subcase 3: $P^{\pm} \in (\Ii \smallsetminus \{X,Y\})^{\pm}$.} By our construction of $\Psi_j$ and  \eqref{iter} we have 
$$
\Delta(W^+; \Psi_{j,t}, [\xi^{p_j^2}_+])  =p_j^2\Delta(W; \phi^t_H, [\xi])
$$
and 
$$
\Delta(P^{\pm}; \psi_{j,t}, [\xi^{p_j^2}_+])  =p_j^2\Delta(P; \phi^t_H, [\xi]).
$$
Now $\Delta(W; \phi^t_H, [\xi]) = 0 \mod 4$ and  $\Delta(P; \phi^t_H, [\xi])$ is irrational, so for $j$ sufficiently large we have 
$$
| \Delta(W^+; \Psi_{j,t}, [\xi^{p_j^2}_+]) - \Delta(P^{\pm}; \Psi_{j,t}, [\xi^{p_j^2}_+])| > 3.
$$

\noindent{\it Subcase 4:  $P^{\pm} \in \mathcal{Z}^{\pm}$.} In this case, 
\begin{equation}
\label{ }
\Delta(W^+; \Psi_{j,t}, [\xi^{p_j^2}_+]) - \Delta(P^{\pm}; \Psi_{j,t}, [\xi^{p_j^2}_+]) =0 \mod 4.
\end{equation}
If the mean indices are not equal we can argue as in the previous subcase. If they are equal, then we are done. 
\end{proof}

Proposition \ref{endpath2} leads to the desired contradiction in Case 2. The first assertion  implies that $\Psi_j$ can not be Hamiltonian and the second implies that the Floer homology $\FH(\Psi_j)$ is nontrivial. This again contradicts 
 Proposition \ref{hom}.

\end{document}